\newcommand{\e}{\rm{e}}
\newcommand{\bE}{{\bf E}}
\newcommand{\bR}{{\bf R}}
\newcommand{\be}{\begin{equation}}
\newcommand{\ee}[1]{\end{equation}{\label{#1}}}
\newcommand{\rf}[1]{\eqref{#1}}
\newcommand{\wh}[1]{{\widehat{#1}}}
\newcommand{\bqn}{\begin{eqnarray}}
\newcommand{\eqn}[1]{\label{#1}\end{eqnarray}}
\newcommand{\bse}{\begin{eqnarray*}}
	\newcommand{\ese}{\end{eqnarray*}}
\newcommand{\nn}{\nonumber \\}
\newcommand{\la}{\langle}
\newcommand{\ra}{\rangle}
\newcommand{\Prox}{{\rm Prox}}
\newcommand{\Prob}{{\rm Prob}}
\newcommand{\cA}{{\cal A}}
\newcommand{\cB}{{\cal B}}
\newcommand{\ovN}{ {\overline{N}}}
\newcommand{\half}{ \mbox{\small$\frac{1}{2}$}}
\newcommand{\argmin}{{\rm arg\,min}}
\def\qed{\ \hfill$\square$\par\smallskip}
\newtheorem{proposition}{Proposition}
\newtheorem{corollary}{Corollary}
\newtheorem{theorem}{Theorem}
\newtheorem{thm}{Theorem}
\newtheorem{lemma}[thm]{Lemma}
\newenvironment{proofoflemma}[1]
{\vspace{1pt}\par{\bf%
		Proof of Lemma~#1.}}%
{\noindent\vspace{1pt}}
\newenvironment{proof}
{\vspace{1pt}\par{\bf%
		Proof\,}}%
{\noindent\vspace{17pt}}
\begin{document}  
	
	\title{
		Algorithms of Robust Stochastic Optimization
		Based on Mirror Descent Method%
	}%
	\date{}
	
	\author{
A.B.~Juditsky,\thanks{LJK,
		Universit\'e Grenoble Alpes, Grenoble
		{\tt anatoli.juditsky@univ-grenoble-alpes.fr}}
		\and
A.V.~Nazin,\thanks{Institue of Control Sciences RAS, Moscow {\tt nazine@ipu.ru}}\and
A.S.~Nemirovsky,\thanks{ISyE, Georgia Institute of Technology, Atlanta {\tt nemirovs@isye.gatech.edu}}\and
A.B.~Tsybakov\thanks{CREST, ENSAE {\tt alexandre.tsybakov@ensae.fr}\newline
Work of A.V.~Nazin was supported by the Russian Science Foundation (Grant No. 16-11-10015), work of A.B.~Tsybakov was supported by the GENES Institute and by the grant
		Labex Ecodec (ANR-11-LABEX-0047).
		A.B.~Juditsky was supported by the grant PGMO 2016-2032H and A.B.~Juditsky and A.S.~Nemirovsky were supported by the NSF grant CCF-1523768.
	}}
	
	\maketitle
	
	\begin{abstract}	
		We propose an approach to the construction of robust non-Euclidean iterative algorithms by convex composite stochastic optimization based on truncation of stochastic gradients.
		For such algorithms, we establish sub-Gaussian confidence bounds under weak assumptions about the tails of the noise distribution in convex and strongly convex settings. Robust estimates of the accuracy of general stochastic algorithms are also proposed.
	\end{abstract}

	\textit{Keywords}:
	robust iterative algorithms,
	stochastic optimization algorithms,
	convex composite stochastic optimization,
	mirror descent method,
	robust confidence sets.

	\section{Introduction}
	
	In this paper, we consider the problem of {\em convex composite stochastic  optimization}:
	\bqn
	\min\limits_{x\in X} F(x), \qquad \qquad F(x) =\bE\{\Phi(x,\omega)\}+\psi(x),
	\eqn{prob1}
	where $X$ is a compact convex subset of
	a finite-dimensional real vector space
	$E$ with norm $\|\cdot\|$,  $\omega$ is a random variable on a probability space $\Omega$ with distribution $P$, function $\psi$ is convex and continuous,
	and function $\Phi:\;X\times \Omega\to \bR$.
	Suppose that the expectation
	\[
	\phi(x):=\bE\{\Phi(x,\omega)\}=\int_\Omega \Phi(x,\omega) dP(\omega)
	\]
	is finite for all $x\in X$,
	and is a convex and differentiable function of $x$.
	Under these assumptions, the problem \rf{prob1} has a solution
	with optimal value $F_*=\min_{x\in{X}}F(x)$.
	\par
	Assume that there is an oracle, which
	for any input $(x,\omega) \in X \times \Omega$
	returns a stochastic gradient that is  a vector $G(x,\omega)$
	satisfying
	\bqn
	\bE\{G(x,\omega)\}=\nabla \phi(x) \quad\mbox{and}\quad \bE\{\|G(x,\omega)-\nabla \phi(x)\|^2_*\}\leq \sigma^2,
	\quad \forall\,x\in X,
	\eqn{eq:soracle}
	where $\|\cdot\|_*$ is conjugate norm to $\|\cdot\|$, and $\sigma>0$ is
	a constant.
	The aim of this paper is to construct $ (1- \alpha)$-reliable approximate solutions of the problem \rf{prob1}, i.e., solutions
	$\wh{x}_N$, based on $ N $ queries of the oracle and satisfying the condition
	\bqn
	\Prob\{F(\wh x_N)-F^*\leq \delta_N(\alpha)\}\geq 1-\alpha,
	\quad\forall\,\alpha\in (0,1),
	\eqn{eq:conf1}
	with  as small as possible $\delta_N(\alpha)>0$.
	\par
	Note that stochastic optimization problems of the form \rf{prob1} arise in the context of penalized risk minimization, where the confidence bounds \rf{eq:conf1} are directly converted into confidence bounds for the accuracy of the obtained estimators. In this paper, the bounds \rf{eq:conf1} are derived with $\delta_N(\alpha)$ of order $\sqrt{\ln(1/\alpha)/N}$.
	Such bounds are often called sub-Gaussian confidence bounds. Standard results on sub-Gaussian confidence bounds for stochastic optimization algorithms assume boundedness of exponential or subexponential moments of the stochastic  noise of the oracle  $G(x,\omega)-\nabla \phi(x)$
	(cf.  \cite{nemirovski2009robust,juditsky2014deterministic,ghadimi2012optimal}).
	In the present paper, we propose robust stochastic algorithms that satisfy sub-Gaussian bounds of type \rf{eq:conf1} under a significantly less restrictive condition \rf{eq:soracle}.
	\par
	Recall that the notion of robustness of statistical decision procedures was introduced by J.~Tukey \cite{tukey1960survey} and P.~Huber 
	\cite{huber1964robust,huber1972,huber1981robust}
	in the $1960$ies, which led to the subsequent development of robust stochastic approximation algorithms.
	In particular, in the 1970ies--1980ies, algorithms that are robust for wide classes of noise distributions were proposed for problems of stochastic optimization and parametric identification.
	Their asymptotic  properties when the sample size  increases have been well studied, see, for example, 
	\cite{martin1975robust,polyak1979adaptive,polyak1980robust,poljak1980robusta,price1979robust,stankovic1986analysis,chen1987convergence,chen1989robustness,nazin1992optimal} and references therein.
	An important contribution to the development of the robust approach was made by Ya.Z.~Tsypkin.
	Thus, a significant place in the monographs \cite{YaZTsypkin1984,YaZTsypkin1995} is devoted to the study of iterative robust identification algorithms.
	\par
	The interest in robust estimation resumed in the 2010ies due to the need to develop statistical procedures that are resistant to noise with heavy tails in high-dimensional problems.
	{Some recent work
		\cite{Lecue-etal-19,Lecue-etal-20,Lecue-etal-21,Lecue-etal-22,Lerale-etal-23,Lugosi2016Mendelson,Lugosi2017Mendelson,Lugosi2018Mendelson
			,
			hsu2016loss,bubeck2013bandits,devroye2016sub}
		develops the method of median of means \cite{nemirovskii1979complexity} for constructing estimates that satisfy sub-Gaussian confidence bounds for noise with heavy tails.
		Thus, in \cite{hsu2016loss} the median of means approach was used to construct an $(1-\alpha)$-reliable version of stochastic approximation with averaging (``batch'' algorithm) in a stochastic optimization setting similar to \rf{prob1}.
		Other original approaches were developed in
		\cite{lugosi2019sub,catoni2012challenging,audibert2011robust,minsker2015geometric,wei2017estimation}%
		, in particular, the geometric median techniques for robust estimation of signals and covariance matrices with sub-Gaussian guarantees \cite{minsker2015geometric,wei2017estimation}.
		Also there was a renewal of interest in robust iterative algorithms. Thus, it was shown that robustness of stochastic approximation algorithms can be enhanced by using the geometric median of stochastic gradients  \cite{chen2017distributed,yin2018byzantine}.
		Another variant of the stochastic approximation procedure for calculating the geometric median was studied in \cite{cardot2010stochastic,cardot2017online}, where a specific property of the problem (boundedness of the stochastic gradients) allowed the authors to construct $(1-\alpha)$-reliable bounds under a very weak assumption about the tails of the noise distribution.
	}
	\par
	This paper discusses an approach to the construction of robust stochastic algorithms based on truncation of the stochastic gradients. It is shown that this method satisfies sub-Gaussian confidence bounds.
	In Sections 2 and 3, we define the main components of the optimization problem under consideration.
	In Section 4, we define the robust stochastic mirror descent algorithm and establish confidence bounds for it. Section 5 is devoted to robust accuracy estimates for general stochastic algorithms.
	Finally, Section 6 establishes robust confidence bounds for problems, in which $F$ has a quadratic growth. The Appendix contains the proofs of the results of the paper.
	
	\section{Notation and Definitions}
	
	\label{sec:pstat}
	
	Let $E$ be a
	finite-dimensional real vector space with norm $\|\cdot\|$ and let $E^*$ be the conjugate space to $E$.
	Denote by $\langle s, x \rangle$ the value of linear function
	$s \in E^*$ at point $x \in E$ and by
	$\| \cdot \|_*$ the conjugate to norm $\|\cdot\|$ on $E^*$, i.e.,
	\[
	\| s \|_* = \max\limits_x \{ \la s, x \ra: \; \|x\|\leq 1 \},
	\quad s \in E^*.
	\]
	On the unit ball
	\[B=\{x\in E:\;\|x\|\leq 1\},\]
	we consider a continuous convex function $\theta:B\to\textbf{R}$ with the following property:
	\bqn
	\langle\theta'(x)-\theta'(x'),x-x'\rangle\ge \|x-x'\|^2,\quad \forall x,x'\in B,
	\eqn{pp}
	where $\theta'(\cdot)$ is a continuous in
	$B^o = \{x\in B: \, \partial\theta(x)\neq\emptyset\}$ version of the
	subgradient of $\theta(\cdot)$ and $\partial\theta(x)$ denotes the subdifferential of function $\theta(\cdot)$ at point $x$, i.e., the set of all subgradients at this point.
	In other words, function $\theta(\cdot)$ is strongly convex on $B$ with  coefficient~1 with respect to the norm $\|\cdot\|$.
	We will call $\theta(\cdot)$ the normalized proxy function.
	Examples of such functions are:
	\begin{itemize}
		\item  $\theta(x)=\half \|x\|_2^2$ for $\left(E, \|\cdot\|\right)=\left(\textbf{R}^n, \|\cdot\|_2\right)$;
		\item  $\theta(x)={2\e(\ln n)} \|x\|_p^p$ with $p=p(n):=1+{1\over 2\ln n}$ for
		$\left(E, \|\cdot\|\right)=\left(\textbf{R}^n, \|\cdot\|_1\right)$;
		\item $\theta(x)=4\e(\ln n)\sum_{i=1}^n |\lambda_i(x)|^p$\, with\, $p=p(n)$
		for $E=S_n$, where $S_n$ is the space of symmetric $n\times n$ matrices equipped with the nuclear norm $\|x\|=\sum_{i=1}^n |\lambda_i(x)|$ and $\lambda_i(x)$ are eigenvalues of matrix $x$.
	\end{itemize}
	Here and in what follows, $\|\cdot\|_p$ denotes the $\ell_p$-norm in $\mathbf{R}^n$, $p\geq1$.
	Without loss of generality, we will assume below that
	\[
	0=\argmin_{x\in B}\,\theta(x).
	\]
	We also introduce the notation
	\[
	\Theta=\max_{x\in B}\theta(x)-\min_{x\in B}\theta(x)\geq \half.
	\]
	Now, let $X$ be a convex compact subset in $E$ and let $x_0\in X$ and $R>0$ be such that
	$\max_{x\in X}\|x-x_0\|\leq R$. We equip $X$ with a proxy function
	\[
	\vartheta(x)=
	R^2\theta\left({x-x_0\over R}\right).
	\]
	Note that $\vartheta(\cdot)$ is strongly convex with coefficient~1 and
	\[
	\max_{x\in X}\vartheta(x)-\min_{x\in X}\vartheta(x)\leq R^2\Theta.
	\]
	Let $D:=\max_{x,x'\in X}\|x-x'\|$ be the diameter of the set $X$. Then $D\leq 2R$.
	\par
	We will also use the Bregman divergence
	\[
	V_x(z)=\vartheta(z)-\vartheta(x)-\langle \vartheta'(x),z-x\rangle,
	\quad\forall\, z, x\in{X}.
	\]
	In the following, we denote by $C$ and $C'$ positive numerical constants, not necessarily the same in different cases.
	
	\section{Assumptions}\label{sec:pstat}
	
	Consider a convex  composite stochastic optimisation problem \rf{prob1} on
	a convex compact set $X\subset E$.
	Assume in the following that the function
	\[
	\phi(x)=\bE\{\Phi(x,\omega)\}
	\]
	is convex on $X$, differentiable at each point of the set $X$ and its
	gradient satisfies the Lipschitz condition
	\bqn
	\|\nabla \phi(x')-\nabla \phi(x)\|_*\leq L\|x-x'\|,\qquad \forall \,x,x'\in X.
	\eqn{Lipf}
	\par
	Assume also that function $\psi$ is convex and continuous.
	In what follows, we assume that we have at our disposal a stochastic oracle, which for
	any input
	$(x,\omega)\in X\times \Omega$,
	returns a random vector
	$G(x,\omega)$,
	satisfying the conditions \eqref{eq:soracle}.
	In addition, it is assumed that for any $a\in E^*$ and $\beta>0$ an exact solution of the minimization problem
	\[
	\min_{z\in X} \{\langle a,z\rangle +\psi(z) +\beta\vartheta(z) \}
	\]
	is available.
	This assumption is fulfilled for typical  penalty functions $\psi$, such as convex power functions of  the $\ell_p$-norm
	(if $X$ is a convex compact in $\mathbf{R}^n$)
	or negative entropy $\psi(x) = \kappa \sum_{j=1}^{n}  x_j \ln x_j$, where $\kappa>0$
	(if $X$ is the standard simplex in $\mathbf{R}^n$).
	Finally, it is assumed that a vector $g(\bar{x})$ is available,
	where $\bar{x}\in X$ is a point in the set $X$ such that
	\begin{equation}\label{eq:8a}
	\|g(\bar{x}) - \nabla\phi(\bar{x})\|_* \leq \upsilon\sigma
	\end{equation}
	with a constant $\upsilon\geq0$.
	This assumption is motivated as follows.
	\par
	First, if we a {\em priori} know that the global minimum of function $\phi$
	is attained at an interior point $ x_\phi$ of the set $X$ (what is common in statistical applications of stochastic approximation),
	we have $\nabla\phi(x_\phi)=0$. Therefore, choosing $\bar x=x_\phi$,
	one can put $g(\bar x)=0$ and assumption \rf{eq:8a} holds automatically with $\upsilon=0$.
	\par
	Second, in general, one can choose $\bar x$
	as any point of the set $X$ and $g(\bar x)$
	as a geometric median of stochastic gradients
	$G(\bar x,\omega_i)$, $i=1,\dots,m$, over $m$ oracle queries.
	It follows from
	\cite{minsker2015geometric}
	that if $m$ is of order $\ln\left(\varepsilon^{-1}\right)$
	with some sufficiently small $\varepsilon>0$, then
	\bqn
	\Prob\{\|g(\bar x)-\nabla \phi(\bar x)\|_*> \upsilon\sigma \}\leq \varepsilon.
	\eqn{bar g}
	Thus, the confidence bounds obtained below will remain valid up to an $\varepsilon$-correction in the probability of deviations.

	
	\section{Accuracy bounds for Algorithm RSMD
	}
	
	\label{sec:smd}
	
	In what follows, we consider
	that the assumptions of Section \ref{sec:pstat} are fulfilled.
	Introduce a composite proximal transform
	\begin{eqnarray}\label{prox}
	\Prox_{\beta,x}(\xi)&:=&\argmin_{z\in X}\big\{\langle  \xi,z\rangle +\psi(z)+\beta V_{x}(z)\big\}=\\
	\nonumber &=&\argmin_{z\in X}\big\{\langle  \xi-\beta \vartheta'(x),z\rangle +\psi(z)+\beta \vartheta(z)\big\},
	\end{eqnarray}
	where $\beta>0$ is a tuning parameter.
	\par
	For $i=1,2,\dots$,
	define the algorithm of
	{\em Robust Stochastic Mirror Descent} (RSMD) by the recursion
	\bqn
	x_{i}=\Prox_{\beta_{i-1},x_{i-1}}(y_{i}),\;\;\;x_0\in X,
	\eqn{eq:md1}
	\bqn
	~~~~~~y_i=\left\{\begin{array}{ll}G(x_{i-1},\omega_i),&\mbox{if}\;\; \|G(x_{i-1},\omega_i)-g(\bar x)\|_*\leq L\|\bar x-x_{i-1}\|
		+\lambda+\upsilon\sigma,\\
		g(\bar x),&\mbox{otherwise.}
	\end{array}
	\right.
	\eqn{t_grad0}
	Here $\beta_i>0$, $i=0,1,\dots$, and $\lambda>0$ are tuning parameters that will be defined below,
	and $\omega_1,\omega_2,\dots$ are independent identically distributed (i.i.d.) realizations of a random variable $\omega$,
	corresponding to the oracle queries at each step of the algorithm.
	
	The approximate solution of problem \rf{prob1} after $N$ iterations is defined as the weighted average
	\bqn
	\widehat{x}_N=\left[\sum_{i=1}^{N}\beta^{-1}_{i-1}\right]^{-1}\sum_{i=1}^{N} \beta_{i-1}^{-1}x_i.
	\eqn{eq:asol}
	
	\par
	If the global minimum of function $\phi$ is attained at an interior point of the set $X$ and $\upsilon=0$,
	then definition \rf{t_grad0} is simplified.
	In this case, replacing $\|\bar{x}-x_{i-1}\|$ by the upper bound $D$ and putting $\upsilon=0$ and $g(\bar x)=0$ in \rf{t_grad0}, we define the truncated stochastic gradient by the formula
	\[
	y_i=\left\{\begin{array}{ll}G(x_{i-1},\omega_i),&\mbox{if~} \,\,\|G(x_{i-1},\omega_i)\|_*\leq LD+\lambda,\\
	0,&\mbox{otherwise}.\end{array}
	\right.
	\]
	\par
	The next result describes some useful properties of mirror descent recursion \rf{eq:md1}.
	Define
	\[ \xi_i=y_i-\nabla \phi(x_{i-1})\]
	and
	\bqn
	\varepsilon(x^N,z)=\sum_{i=1}^{N}\beta^{-1}_{i-1}[\langle \nabla \phi(x_{i-1}),x_{i}-z\rangle+ \psi(x_{i})-\psi(z)]+\half V_{x_{i-1}}(x_i),
	\eqn{epsdef}
	where $x^N=(x_0,\dots, x_N)$.
	
	\begin{proposition}\label{pr:muprop}
		Let $\beta_i\ge 2L$ for all $i=0,1,...$, and let  $\widehat{x}_N$ be defined in \rf{eq:asol}, where $x_i$
		are iterations \rf{eq:md1} for any values $y_i$, not necessarily given by  \rf{t_grad0}. Then for any $z\in X$ we have
		\bqn
		\left[\sum_{i=1}^{N}\beta^{-1}_{i-1}\right][F(\widehat{x}_N)-F(z)]&\le&\sum_{i=1}^{N}\beta^{-1}_{i-1}[ F(x_i)-F(z)]\leq
		\varepsilon(x^N,z)
		\nn
		&\leq & V_{x_0}(z)+\sum_{i=1}^{N}\Big[
		{\langle \xi_{i},z-x_{i-1}\rangle\over \beta_{i-1}} +{\|\xi_{i}\|_*^2\over \beta_{i-1}^2}\Big]
		\label{simple0}\\
		&\leq & 2V_{x_0}(z)+\sum_{i=1}^{N}\Big[
		{\langle \xi_{i},z_{i-1}-x_{i-1}\rangle\over \beta_{i-1}} +{ 3\over 2}{\|\xi_{i}\|_*^2\over \beta_{i-1}^2}\Big],
		\eqn{simple1}
		where $z_i$ is a random vector with values in $X$ depending only on $x_0,\xi_1,\dots,\xi_i$.
	\end{proposition}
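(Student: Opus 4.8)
The plan is to establish the chain of inequalities by first analyzing a single step of the recursion \rf{eq:md1}, then summing over $i$ and using convexity. The workhorse is the standard variational characterization of the proximal map \rf{prox}: since $x_i=\Prox_{\beta_{i-1},x_{i-1}}(y_i)$ minimizes $\langle y_i,z\rangle+\psi(z)+\beta_{i-1}V_{x_{i-1}}(z)$ over $X$, the optimality condition gives, for every $z\in X$,
\be
\langle y_i+\psi'(x_i),x_i-z\rangle\leq\beta_{i-1}\big[V_{x_{i-1}}(z)-V_{x_i}(z)-V_{x_{i-1}}(x_i)\big],
\ee{eq:optcond}
where the bracket on the right is the three-point identity for the Bregman divergence $V$. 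Writing $y_i=\nabla\phi(x_{i-1})+\xi_i$ and using convexity of $\psi$ to replace $\langle\psi'(x_i),x_i-z\rangle$ by $\psi(x_i)-\psi(z)$, this rearranges into a per-step bound on $\langle\nabla\phi(x_{i-1}),x_i-z\rangle+\psi(x_i)-\psi(z)$. Dividing by $\beta_{i-1}$ and summing telescopes the $V_{x_{i-1}}(z)-V_{x_i}(z)$ terms down to $V_{x_0}(z)$ (the terminal $-V_{x_N}(z)\leq0$ is dropped), which yields exactly the definition of $\varepsilon(x^N,z)$ in \rf{epsdef} and the first appearance of $V_{x_0}(z)$ on the right-hand side of \rf{simple0}.

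The first two inequalities in \rf{simple0} are the passage from the averaged iterate to the individual iterates. The leftmost step, $[\sum\beta_{i-1}^{-1}][F(\widehat x_N)-F(z)]\leq\sum\beta_{i-1}^{-1}[F(x_i)-F(z)]$, is Jensen's inequality applied to the convex function $F$ and the convex combination \rf{eq:asol}. The second step, bounding $\sum\beta_{i-1}^{-1}[F(x_i)-F(z)]$ by $\varepsilon(x^N,z)$, comes from the gradient inequality for the convex differentiable part: $\phi(x_i)-\phi(z)\leq\langle\nabla\phi(x_i),x_i-z\rangle$, after which one must account for the discrepancy between $\nabla\phi(x_i)$ and $\nabla\phi(x_{i-1})$ using the Lipschitz bound \rf{Lipf} together with the strong convexity $V_{x_{i-1}}(x_i)\geq\half\|x_i-x_{i-1}\|^2$; this is precisely where the hypothesis $\beta_{i-1}\geq 2L$ is consumed, to absorb the $L\|x_i-x_{i-1}\|^2$ term into the $\half V_{x_{i-1}}(x_i)$ appearing in \rf{epsdef}.

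To reach the bound on the noise terms in \rf{simple0}, I would split $\langle\xi_i,x_i-z\rangle=\langle\xi_i,x_{i-1}-z\rangle+\langle\xi_i,x_i-x_{i-1}\rangle$ in \rf{eq:optcond}; the second piece is controlled by Young's inequality $\langle\xi_i,x_i-x_{i-1}\rangle\leq\frac{1}{2\beta_{i-1}}\|\xi_i\|_*^2+\frac{\beta_{i-1}}{2}\|x_i-x_{i-1}\|^2$, and the quadratic term is again killed by strong convexity of $V_{x_{i-1}}$, producing the $\|\xi_i\|_*^2/\beta_{i-1}^2$ summand. The final inequality \rf{simple1} is the non-trivial part: the point is to replace $x_{i-1}$, which depends on $\xi_i$ through nothing (it is $\xi_i$-measurable only via the past), by a surrogate $z_{i-1}$ that is measurable with respect to $x_0,\xi_1,\dots,\xi_i$ but chosen so as to decorrelate favorably in the later martingale analysis; concretely one would define $z_{i-1}$ as an auxiliary proximal iterate driven by the conditional mean, and bound the difference $\langle\xi_i,x_{i-1}-z_{i-1}\rangle$ by a further Young-type splitting that costs the extra $V_{x_0}(z)$ (doubling the leading constant to $2V_{x_0}(z)$) and inflates the noise coefficient from $1$ to $\tfrac32$. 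I expect this last replacement to be the main obstacle, since it requires exhibiting the correct $z_i$ recursion and verifying the claimed measurability while keeping the accumulated error within the stated constants.
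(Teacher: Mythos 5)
Your derivation of \rf{simple0} is sound and follows the paper's own route step for step: the prox optimality condition plus the three-point identity for $V$, the descent lemma for $\phi$ with $\beta_{i-1}\ge 2L$ absorbing $LV_{x_{i-1}}(x_i)$ into the $\half V_{x_{i-1}}(x_i)$ of \rf{epsdef}, the split $\la\xi_i,x_i-z\ra=\la\xi_i,x_{i-1}-z\ra+\la\xi_i,x_i-x_{i-1}\ra$ handled by Young's inequality and strong convexity of $V_{x_{i-1}}(\cdot)$, telescoping of $V_{x_i}(z)-V_{x_{i+1}}(z)$, and Jensen for the averaged iterate. Up to that point there is nothing to object to.

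The gap is in \rf{simple1}, and it is exactly the step you flag as unresolved. Two things in your plan are off. First, the surrogate sequence is not ``an auxiliary proximal iterate driven by the conditional mean''; in the paper it is a \emph{virtual mirror-descent sequence driven by the realized noise vectors themselves}: $z_0=x_0$ and $z_i=\argmin_{z\in X}\big\{\beta_{i-1}^{-1}\la\xi_i,z\ra+V_{z_{i-1}}(z)\big\}$, which makes the claimed measurability ($z_i$ a function of $x_0,\xi_1,\dots,\xi_i$) automatic. Second, the mechanism is not a Young-type bound on $\la\xi_i,x_{i-1}-z_{i-1}\ra$: that term is never estimated at this stage — it is kept verbatim in \rf{simple1}, precisely because both $z_{i-1}$ and $x_{i-1}$ are predictable, so the sum $\sum_i\beta_{i-1}^{-1}\la\xi_i,z_{i-1}-x_{i-1}\ra$ is a martingale to which Bernstein's inequality is applied later (Lemma 3 of the Appendix). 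What \emph{is} bounded is the other piece of the decomposition $\la\xi_i,z-x_{i-1}\ra=\la\xi_i,z_{i-1}-x_{i-1}\ra+\la\xi_i,z-z_{i-1}\ra$: the standard mirror-descent regret bound for the virtual sequence (Lemma~6.1 of Nemirovski--Juditsky--Lan--Shapiro, which is nothing but the per-step prox analysis you already performed, rerun with $\psi\equiv 0$ and input $\xi_i$) gives, \emph{pathwise} and uniformly in $z\in X$,
\[
\sum_{i=1}^N\beta_{i-1}^{-1}\la\xi_i,z-z_{i-1}\ra\;\leq\; V_{x_0}(z)+\half\sum_{i=1}^N\beta_{i-1}^{-2}\|\xi_i\|_*^2 .
\]
Adding this to \rf{simple0} produces \rf{simple1} at once, with $V_{x_0}(z)+V_{x_0}(z)=2V_{x_0}(z)$ and $1+\half=\tfrac32$ accounting exactly for the stated constants. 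The missing idea, then, is that no decorrelation or conditional-expectation argument is needed here at all: the replacement of the $\sup$-dependent $z$ by the predictable $z_{i-1}$ is a purely deterministic regret inequality, obtained by running the same machinery a second time on a fictitious trajectory fed the noise.
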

	Using Proposition \ref{pr:muprop} we obtain the following bounds on the expected error $F(\wh{x}_N)-F_*$ of the approximate solution of problem \rf{prob1} based on the RSMD  algorithm. In what follows, we denote by $\bE\{\cdot\}$ the expectation with respect to the distribution of $\omega^N=(\omega_1,...,\omega_N)\in \Omega^{\otimes N}$.
	\begin{corollary}\label{cor:secmom}
		Set $M=LR$. Assume that
		$
		\lambda\geq \max\{M,\sigma\sqrt{N}\}+\upsilon\sigma$ and $\beta_i\geq 2L$ for all $i=0,1,...$.
		Let  $\wh{x}_N$ be the approximate solution \rf{eq:asol}, where $x_i$ are the iterations of the RSMD algorithm
		defined by relations \rf{eq:md1} and \rf{t_grad0}.
		Then
		\bqn
		\bE\{F(\wh{x}_N)\}- F_*&\leq&
		\left[\sum_{i=1}^{N}\beta^{-1}_{i-1}\right]^{-1}\left[R^2\Theta+
		\sum_{i=1}^N\left({2D\sigma\over \beta_{i-1}\sqrt{N}}+{4\sigma^2\over\beta_{i-1}^2}
		\right)\right].
		\eqn{eq:cor1-1}
		In particular, if
		$\beta_i=\bar\beta$ for all $i=0,1,...$, where
		\bqn
		\bar\beta=\max\left\{2L,{\sigma\sqrt{N}\over R\sqrt{\Theta}}\right\},
		\eqn{eq:betabar}
		then the following inequalities hold:
		\bqn
		\bE\{F(\wh{x}_N)\}- F_*\leq \mbox{$\bar\beta\over N$}\bE\left\{\sup_{z\in X}\varepsilon(x^N,z)\right\}
		\leq
		C\max\left\{{LR^2\Theta\over N},\,{\sigma R\sqrt{\Theta}\over \sqrt{N}}\right\}.
		\eqn{eq:cor1supz}
		Moreover, in this case we have the following inequality with explicit constants:
		\[
		\bE\{F(\wh{x}_N)\}- F_*\leq\max\left\{{2LR^2\Theta\over N}+{4R\sigma(1+\sqrt{\Theta})\over \sqrt{N}},
		\;{2R\sigma(1+4\sqrt{\Theta})\over \sqrt{N}}\right\}.
		\]
	\end{corollary}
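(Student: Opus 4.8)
The plan is to derive everything from Proposition~\ref{pr:muprop} by taking expectations and controlling the two noise contributions produced by the truncated gradients. Writing $\zeta_i = G(x_{i-1},\omega_i)-\nabla\phi(x_{i-1})$ for the genuine oracle noise, so that $\bE\{\zeta_i\mid\omega^{i-1}\}=0$ and $\bE\{\|\zeta_i\|_*^2\mid\omega^{i-1}\}\le\sigma^2$ by \rf{eq:soracle}, I would first record the elementary but decisive observation that truncation is only ever triggered by a large oracle noise. Indeed, on the event where $y_i=g(\bar x)$ one has $\|G(x_{i-1},\omega_i)-g(\bar x)\|_*>L\|\bar x-x_{i-1}\|+\lambda+\upsilon\sigma$, while the triangle inequality together with \rf{Lipf} and \rf{eq:8a} gives $\|G(x_{i-1},\omega_i)-g(\bar x)\|_*\le\|\zeta_i\|_*+L\|\bar x-x_{i-1}\|+\upsilon\sigma$; subtracting, the truncation event is contained in $\{\|\zeta_i\|_*>\lambda\}$. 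This inclusion is the heart of the argument.

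Next I would estimate the conditional bias $b_i=\bE\{\xi_i\mid\omega^{i-1}\}$ and the conditional second moment of $\xi_i=y_i-\nabla\phi(x_{i-1})$. Since $\bE\{G(x_{i-1},\omega_i)\mid\omega^{i-1}\}=\nabla\phi(x_{i-1})$, one has $b_i=\bE\{(g(\bar x)-G(x_{i-1},\omega_i))\fcar{y_i=g(\bar x)}\mid\omega^{i-1}\}$, so that by the inclusion above and the triangle inequality $\|b_i\|_*\le(L\|\bar x-x_{i-1}\|+\upsilon\sigma)\Prob\{\|\zeta_i\|_*>\lambda\}+\bE\{\|\zeta_i\|_*\fcar{\|\zeta_i\|_*>\lambda}\}$. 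Chebyshev's inequality $\Prob\{\|\zeta_i\|_*>\lambda\}\le\sigma^2/\lambda^2$ and the truncated-moment bound $\bE\{\|\zeta_i\|_*\fcar{\|\zeta_i\|_*>\lambda}\}\le\sigma^2/\lambda$ turn this into a multiple of $\sigma^2/\lambda$, which the hypothesis $\lambda\ge\sigma\sqrt N$ converts into order $\sigma/\sqrt N$; multiplying by $\|x_*-x_{i-1}\|\le D$ produces the linear term $2D\sigma/(\beta_{i-1}\sqrt N)$. For the quadratic term I would split on the truncation event: where no truncation occurs $\|\xi_i\|_*=\|\zeta_i\|_*$ contributes at most $\sigma^2$, and where it occurs $\|\xi_i\|_*\le L\|\bar x-x_{i-1}\|+\upsilon\sigma$, whose square times $\Prob\{\|\zeta_i\|_*>\lambda\}\le\sigma^2/\lambda^2$ is again $O(\sigma^2)$ by the choice of $\lambda$; careful bookkeeping gives $\bE\{\|\xi_i\|_*^2\}\le4\sigma^2$.

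With these two estimates in hand, taking expectations in \rf{simple0} at $z=x_*$ (a minimizer of $F$) and using $V_{x_0}(x_*)\le R^2\Theta$ produces \rf{eq:cor1-1} directly. For the sharper bound \rf{eq:cor1supz} I would instead use \rf{simple1}: the point is that there the noise sum involves the auxiliary sequence $z_{i-1}$, which is measurable with respect to $x_0,\xi_1,\dots,\xi_{i-1}$ and hence free of the free variable $z$, so the whole dependence on $z$ sits in the term $2V_{x_0}(z)\le2R^2\Theta$. Taking the supremum over $z\in X$ and then the expectation, the same conditional estimates (now with the factor $\tfrac32$ in front of $\|\xi_i\|_*^2$) bound $\bE\{\sup_z\varepsilon(x^N,z)\}$ by $2R^2\Theta+2D\sigma\sqrt N/\bar\beta+6N\sigma^2/\bar\beta^2$. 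Substituting $\bar\beta$ from \rf{eq:betabar}, multiplying by $\bar\beta/N$, and using $D\le2R$ and $\Theta\ge\tfrac12$ collapses all three contributions into a single $\max\{LR^2\Theta/N,\ \sigma R\sqrt\Theta/\sqrt N\}$ up to an absolute constant; carrying the constants explicitly yields the final displayed inequality.

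I expect the main obstacle to be the bias analysis, because truncation destroys the unbiasedness of the oracle and the sum $\sum_i\beta_{i-1}^{-1}\langle\xi_i,z-x_{i-1}\rangle$ is no longer a zero-mean martingale-difference sum. The scheme works only because the inclusion $\{y_i\ne G(x_{i-1},\omega_i)\}\subseteq\{\|\zeta_i\|_*>\lambda\}$ lets the mere second-moment assumption \rf{eq:soracle} control both the probability and the truncated first moment of the offending noise, provided $\lambda$ is taken as large as $\sigma\sqrt N$. Balancing this choice of $\lambda$ against the size of the truncation level, so that the quadratic contribution stays $O(\sigma^2)$ while the bias stays $O(\sigma/\sqrt N)$, is the delicate quantitative point.
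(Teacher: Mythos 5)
Your proposal is correct and follows essentially the same route as the paper: the inclusion of the truncation event in $\{\|\zeta_i\|_*>\lambda\}$ together with the Chebyshev-type bias and second-moment bounds on $\xi_i$ is exactly the content of the paper's Lemma~\ref{lem:cut}, and the passage to \rf{eq:cor1-1} by taking expectations in \rf{simple0} at a minimizer, then to \rf{eq:cor1supz} via \rf{simple1} (where the past-measurable $z_{i-1}$ makes $2V_{x_0}(z)$ the only $z$-dependent term under the supremum), is the paper's argument verbatim. The only discrepancies are immaterial constant bookkeeping (e.g.\ factors of $2$ coming from bounding $\|\bar x-x_{i-1}\|$ by $D$ versus $R$), which affect neither the structure nor the validity of the proof.
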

	This result shows that if the truncation threshold $\lambda$ is large enough, then the expected error of the proposed algorithm is bounded similarly to the expected error of the standard mirror descent algorithm with averaging, i.e., the algorithm in which stochastic gradients are taken without truncation: $y_i=G(x_{i-1},\omega_i)$.
	
	The following theorem gives confidence bounds for the proposed algorithm.
	\begin{theorem}\label{th:coset1}
		Let
		$\beta_i=\bar\beta\geq 2L$ for all $i=0,1,...$, and let $1\leq \tau\leq N/\upsilon^2$,
		\bqn
		\lambda=\max\left\{\sigma\sqrt{N\over \tau}, M\right\}+\upsilon \sigma.
		\eqn{eq:lambda}
		Let  $\wh{x}_N$ be the approximate solution \rf{eq:asol}, where $x_i$ are the RSMD iterations  defined by relations \rf{eq:md1} and \rf{t_grad0}.
		Then there is a random event $\cA_N\subset \Omega^{\otimes N}$ of probability at least $1-2e^{-\tau}$ such that for all $\omega^N\in \cA_N$ the following inequalities hold:
		\bse
		F(\wh{x}_N)- F_*&\leq &
		\frac{\bar\beta}{N} \sup_{z\in X}\varepsilon(x^N,z)\leq\\
		&\leq&\frac{C}{N}\left({\bar\beta R^2\Theta}+ R\max\left\{\sigma\sqrt{\tau  N},{M\tau}\right\}+{\bar\beta^{-1}\max\{N\sigma^2,M^2\tau\}}\right).
		\ese
		In paticular, chosing $\bar\beta$ as in formula \rf{eq:betabar} we have,
		for all $\omega^N\in \cA_N$,
		\bqn
		~~~~~F(\wh{x}_N)- F_*\leq
		\max\left\{C_1{LR^2[\tau\vee\Theta]\over N},\;C_2\sigma R\sqrt{\tau\vee\Theta \over N}\right\},
		\eqn{eq:thm12}
		where $C_1>0$ and $C_2>0$ are numerical constants.
	\end{theorem}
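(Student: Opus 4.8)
The plan is to deduce everything from Proposition~\ref{pr:muprop} and then to control a single $z$-free random sum by martingale concentration. Taking $z=x_*$ a minimizer of $F$ over $X$ in the chain \rf{simple1} and using $\sum_{i=1}^N\beta_{i-1}^{-1}=N/\bar\beta$ gives the left inequality $F(\wh x_N)-F_*\le\frac{\bar\beta}{N}\sup_{z\in X}\varepsilon(x^N,z)$. The key structural observation is that in the last bound of \rf{simple1} the vectors $z_{i-1}$ depend only on $x_0,\xi_1,\dots,\xi_{i-1}$ and \emph{not} on the free variable $z$; hence the sum
\[
S:=\sum_{i=1}^N\Big[\frac{\la\xi_i,z_{i-1}-x_{i-1}\ra}{\bar\beta}+\frac32\frac{\|\xi_i\|_*^2}{\bar\beta^2}\Big]
\]
carries no $z$-dependence, and with $V_{x_0}(z)\le R^2\Theta$ we get $\sup_z\varepsilon(x^N,z)\le 2R^2\Theta+S$. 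It thus suffices to bound, on an event of probability at least $1-2e^{-\tau}$, the two quantities $\sum_i\la\xi_i,z_{i-1}-x_{i-1}\ra$ by $CR\max\{\sigma\sqrt{\tau N},M\tau\}$ and $\sum_i\|\xi_i\|_*^2$ by $C\max\{N\sigma^2,M^2\tau\}$; inserting these into $\tfrac{\bar\beta}{N}(2R^2\Theta+S)$ produces the first displayed bound of the theorem.

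Set $\mathcal F_i=\sigma(\omega_1,\dots,\omega_i)$, $\eta_i=G(x_{i-1},\omega_i)-\nabla\phi(x_{i-1})$ and $d_i=\nabla\phi(x_{i-1})-g(\bar x)$, so that $\eta_i$ is a martingale difference with $\bE\{\|\eta_i\|_*^2\mid\mathcal F_{i-1}\}\le\sigma^2$ by \rf{eq:soracle}, while $d_i$ is $\mathcal F_{i-1}$-measurable with $\|d_i\|_*\le L\|\bar x-x_{i-1}\|+\upsilon\sigma$ by \rf{Lipf} and \rf{eq:8a}. From the truncation rule \rf{t_grad0} one reads off the two facts that drive the proof: first, the event $\{y_i=g(\bar x)\}$ is contained in $\{\|\eta_i\|_*>\lambda\}$ (because $G(x_{i-1},\omega_i)-g(\bar x)=\eta_i+d_i$ and on truncation $\|\eta_i+d_i\|_*>\|d_i\|_*+\lambda$); second, $\|\xi_i\|_*\le C\lambda$ holds deterministically, using $\|\bar x-x_{i-1}\|\le D\le 2R$, $M\le\lambda$ and $\upsilon\sigma\le\lambda$ (the last from $\tau\le N/\upsilon^2$ and $\lambda\ge\sigma\sqrt{N/\tau}$). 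Moreover $\bE\{\|\xi_i\|_*^2\mid\mathcal F_{i-1}\}\le C\sigma^2$: on $\{y_i=g(\bar x)\}$ one has $\xi_i=-d_i$, and $\|d_i\|_*^2\,\Prob\{\|\eta_i\|_*>\lambda\mid\mathcal F_{i-1}\}\le\|d_i\|_*^2\sigma^2/\lambda^2\le C\sigma^2$.

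The \emph{main obstacle} is that truncation makes $\xi_i$ a biased estimate of $0$, so I split $\xi_i=\zeta_i+b_i$ with $\zeta_i=\xi_i-\bE\{\xi_i\mid\mathcal F_{i-1}\}$ a martingale difference and $b_i=\bE\{\xi_i\mid\mathcal F_{i-1}\}=-\bE\{(\eta_i+d_i)\fcar{y_i=g(\bar x)}\mid\mathcal F_{i-1}\}$ the predictable truncation bias. The delicate point is to bound $b_i$ sharply: writing $\|b_i\|_*\le\bE\{\|\eta_i\|_*\fcar{\|\eta_i\|_*>\lambda}\mid\mathcal F_{i-1}\}+\|d_i\|_*\,\Prob\{\|\eta_i\|_*>\lambda\mid\mathcal F_{i-1}\}$ and applying Markov's inequality to $\|\eta_i\|_*^2$ gives $\|b_i\|_*\le\sigma^2/\lambda+\|d_i\|_*\sigma^2/\lambda^2$. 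The gain is that the potentially large predictable factor $\|d_i\|_*=O(M)$ is multiplied only by the small probability $\sigma^2/\lambda^2$; since $\lambda\ge\max\{\sigma\sqrt{N/\tau},M\}$ this yields $\sum_i\|b_i\|_*\le C(\sigma\sqrt{\tau N}+M\tau)$, which is exactly the target order. Keeping $d_i$ inside the square (e.g. bounding $\bE\{\|\eta_i+d_i\|_*^2\}/\lambda$) would instead produce an unacceptable $O(MN)$ term, so this separation is the heart of the argument. Consequently the bias part of the sum is controlled by $D\sum_i\|b_i\|_*\le CR\max\{\sigma\sqrt{\tau N},M\tau\}$.

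It remains to apply a Bernstein/Freedman inequality for bounded martingale differences twice. For $\sum_i\la\zeta_i,z_{i-1}-x_{i-1}\ra$ each increment is $\mathcal F_{i-1}$-predictable in $z_{i-1}-x_{i-1}$, bounded by $D\|\zeta_i\|_*\le CR\lambda$, with conditional variance at most $D^2\bE\{\|\zeta_i\|_*^2\mid\mathcal F_{i-1}\}\le CR^2\sigma^2$ (from $\bE\{\|\xi_i\|_*^2\mid\mathcal F_{i-1}\}\le C\sigma^2$); Bernstein then gives, with probability $\ge1-e^{-\tau}$, a bound $C(R\sigma\sqrt{N\tau}+R\lambda\tau)\le CR\max\{\sigma\sqrt{\tau N},M\tau\}$. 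For $\sum_i\|\xi_i\|_*^2$ I use the deterministic envelope $\|\xi_i\|_*^2\le C\lambda^2$ and $\bE\{\|\xi_i\|_*^2\mid\mathcal F_{i-1}\}\le C\sigma^2$, and Bernstein applied to $\sum_i(\|\xi_i\|_*^2-\bE\{\|\xi_i\|_*^2\mid\mathcal F_{i-1}\})$ gives, with probability $\ge1-e^{-\tau}$, the bound $C(N\sigma^2+\lambda\sigma\sqrt{N\tau}+\lambda^2\tau)\le C\max\{N\sigma^2,M^2\tau\}$, again using $\lambda\le2\max\{\sigma\sqrt{N/\tau},M\}$ and $\upsilon^2\sigma^2\le N\sigma^2/\tau$. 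Intersecting the two events defines $\cA_N$ with $\Prob(\cA_N)\ge1-2e^{-\tau}$; on $\cA_N$ the displayed intermediate bound follows, and substituting $\bar\beta=\max\{2L,\sigma\sqrt N/(R\sqrt\Theta)\}$ from \rf{eq:betabar} and simplifying each of the three terms (with $M=LR$ and $\tau\vee\Theta\ge1$) collapses them into the two regimes $LR^2[\tau\vee\Theta]/N$ and $\sigma R\sqrt{(\tau\vee\Theta)/N}$ of \rf{eq:thm12}.
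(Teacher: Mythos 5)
Your proposal is correct and follows essentially the same route as the paper: the structural inequality \rf{simple1} from Proposition~\ref{pr:muprop} with the $z$-free sum over predictable $z_{i-1}$, the Markov-type bias and second-moment bounds on the truncated gradient errors (the content of Lemma~\ref{lem:cut}), Freedman--Bernstein martingale concentration for the two sums (the content of Lemma~\ref{le:mart1}), and the final substitution of $\bar\beta$ from \rf{eq:betabar}. Your explicit centering $\xi_i=\zeta_i+b_i$ is only a presentational variant of the paper's handling, where the predictable drift appears as the $Nr$ term inside the Bernstein threshold $Nr+3s\sqrt{N\tau}$.
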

	
	The values of the numerical constants $C_1$ and $C_2$ in \rf{eq:thm12} can be obtained from the proof of the theorem, cf. the bound in \rf{c'c''}.
	
	Confidence bound \rf{eq:thm12} in Theorem \ref{th:coset1}
	contains two terms corresponding to the deterministic error and to the stochastic error. Unlike the case of noise with a ``light tail'' (see, for example, \cite{lan2012optimal}) and the bound in expectation \rf{eq:cor1supz}, the deterministic error  $LR^2[\tau\vee\Theta]/N$ depends on $\tau$.
	Note also that Theorem \ref{th:coset1} gives a sub-Gaussian confidence bound (the order of the stochastic error is $\sigma R\sqrt{[\tau\vee\Theta]/N}$).  However, the truncation threshold  $\lambda$ depends on the confidence level $\tau$. This can be inconvenient for the implementation of the algorithms. Some simple but coarser confidence bounds can be obtained by using a universal threshold independent of $\tau$, which is $\lambda=\max\{\sigma\sqrt{N},M\}+\upsilon \sigma$. In particular, we have the following result.
	
	\begin{theorem}\label{th:coset1sqrt}
		Let
		$\beta_i=\bar\beta\geq 2L$ for all $i=0,1,...$, and let
		$N\geq \upsilon^2$. Set
		$$\lambda=\max\left\{\sigma\sqrt{N}, M\right\}+\upsilon \sigma.$$
		\par\noindent
		Let $\wh{x}_N=N^{-1}\sum_{i=1}^{N}x_i$, where $x_i$ are the iterations of the RSMD algorithm
		defined by relations \rf{eq:md1} and \rf{t_grad0}.
		Then there is a random event $\cA_N\subset \Omega^{\otimes N}$ of probability at least $1-2e^{-\tau}$ such that for all $\omega^N\in \cA_N$ the following inequalities hold:
		\bse
		F(\wh{x}_N)- F_*&\leq &\frac{\bar\beta}{N}\sup_{z\in X}\varepsilon(x^N,z)\leq\\
		&\leq&\frac{C}{N}\left({\bar\beta R^2\Theta}+ \tau R\max\left\{\sigma\sqrt{N},M\right\}+{\tau \bar\beta^{-1}\max\{N\sigma^2,M^2\}}\right).
		\ese
		In particular, choosing $\bar\beta$ as in formula \rf{eq:betabar} we have
		\bqn
		F(\wh{x}_N)- F_*\leq \mbox{$\bar\beta\over N$}\sup_{z\in X}\varepsilon(x^N,z)\leq
		C\max\left\{{LR^2[\tau\vee\Theta]\over N},\;\tau\sigma R\sqrt{\Theta\over N}\right\}
		\eqn{eq:thm12sqr}
		for all $\omega^N\in \cA_N$.	
	\end{theorem}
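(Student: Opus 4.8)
The plan is to follow the same route as the proof of Theorem~\ref{th:coset1}, replacing only the $\tau$-dependent truncation level by the universal one $\lambda=\max\{\sigma\sqrt N,M\}+\upsilon\sigma$, and then to track how this change turns the sub-Gaussian $\sqrt\tau$ of \rf{eq:thm12} into the factor $\tau$ of \rf{eq:thm12sqr}. First I would start from the last inequality \rf{simple1} of Proposition~\ref{pr:muprop} with $\beta_{i-1}\equiv\bar\beta$. Since the auxiliary vectors $z_{i-1}$ there do not depend on $z$ and $\sup_{z\in X}V_{x_0}(z)\le CR^2\Theta$, taking the supremum over $z$ leaves
\[
\sup_{z\in X}\varepsilon(x^N,z)\le CR^2\Theta+\frac1{\bar\beta}\sum_{i=1}^N\langle\xi_i,z_{i-1}-x_{i-1}\rangle+\frac3{2\bar\beta^2}\sum_{i=1}^N\|\xi_i\|_*^2 ,
\]
so it remains to bound the linear and the quadratic sums on an event of probability at least $1-2e^{-\tau}$.

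The conceptual crux is the control of the bias of the truncated gradients. Let $\mathcal F_{i-1}$ denote the $\sigma$-algebra generated by $\omega_1,\dots,\omega_{i-1}$, and write $\bar\xi_i=\bE\{\xi_i\mid\mathcal F_{i-1}\}$, $\tilde\xi_i=\xi_i-\bar\xi_i$, and $\zeta_i=G(x_{i-1},\omega_i)-\nabla\phi(x_{i-1})$. The key observation is that, by \rf{Lipf} and \rf{eq:8a}, $\|\nabla\phi(x_{i-1})-g(\bar x)\|_*\le L\|\bar x-x_{i-1}\|+\upsilon\sigma$, so truncation at step $i$ in \rf{t_grad0} forces $\|\zeta_i\|_*>\lambda$. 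Hence by \rf{eq:soracle} and Markov's inequality $\Prob\{\text{truncation}\mid\mathcal F_{i-1}\}\le\sigma^2/\lambda^2$, and
\[
\|\bar\xi_i\|_*\le\bE\{\|\zeta_i\|_*\mathbf{1}_{\{\|\zeta_i\|_*>\lambda\}}\mid\mathcal F_{i-1}\}+(L\|\bar x-x_{i-1}\|+\upsilon\sigma)\,\frac{\sigma^2}{\lambda^2}\le\frac{C\sigma^2}{\lambda}\le\frac{C\sigma}{\sqrt N},
\]
where the last step uses $\lambda\ge\sigma\sqrt N$ together with $N\ge\upsilon^2$. Consequently the aggregate bias $\bar\beta^{-1}\sum_i\langle\bar\xi_i,z_{i-1}-x_{i-1}\rangle$ is at most $C\bar\beta^{-1}D\sigma\sqrt N$, of the order of the stochastic error; and the same two ingredients yield the deterministic bound $\|\xi_i\|_*\le C\max\{\sigma\sqrt N,M\}$ on both branches of \rf{t_grad0} and the conditional variance bound $\bE\{\|\xi_i\|_*^2\mid\mathcal F_{i-1}\}\le C\sigma^2$.

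Next I would apply a Bernstein-type (Freedman) inequality to the martingale $\sum_i\langle\tilde\xi_i,z_{i-1}-x_{i-1}\rangle$, whose increments, by the previous step, have range $\lesssim D\max\{\sigma\sqrt N,M\}$ and summed conditional variance $\lesssim ND^2\sigma^2$. This gives, with probability at least $1-e^{-\tau}$,
\[
\Big|\sum_{i=1}^N\langle\tilde\xi_i,z_{i-1}-x_{i-1}\rangle\Big|\le C\big(D\sigma\sqrt{N\tau}+D\max\{\sigma\sqrt N,M\}\,\tau\big).
\]
Because the universal threshold is large, $\lambda\asymp\max\{\sigma\sqrt N,M\}\ge\sigma\sqrt N$, the linear-in-$\tau$ range term dominates the $\sqrt\tau$ variance term; this single point is exactly what replaces the sub-Gaussian $\sqrt\tau$ of Theorem~\ref{th:coset1} by the factor $\tau$ in \rf{eq:thm12sqr}. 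A second Bernstein bound applied to $\sum_i\|\xi_i\|_*^2$, written as its conditional-mean part ($\le CN\sigma^2$) plus a martingale with increment range $\lesssim\max\{N\sigma^2,M^2\}$ and summed conditional variance $\lesssim N\sigma^2\max\{N\sigma^2,M^2\}$, yields $\sum_i\|\xi_i\|_*^2\le C\tau\max\{N\sigma^2,M^2\}$ with probability at least $1-e^{-\tau}$.

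Finally, I would let $\cA_N$ be the intersection of the two events above (of probability at least $1-2e^{-\tau}$), collect the three contributions $CR^2\Theta$, the martingale-plus-bias term, and the quadratic term, multiply by $\bar\beta/N$, and substitute $\bar\beta$ from \rf{eq:betabar}; this produces the displayed intermediate bound and then \rf{eq:thm12sqr}. The main obstacle is the bias estimate of the second paragraph: it is the observation that truncation can occur only when the \emph{genuine} oracle noise $\zeta_i$ exceeds $\lambda$ that lets one control both the truncation probability and the bias through the second moment $\sigma^2$ alone, rather than through the inflated quantity $\sigma^2+M^2+\upsilon^2\sigma^2$. The remaining effort is the Bernstein bookkeeping that makes the range term dominate and thereby fixes the correct powers of $\tau$.
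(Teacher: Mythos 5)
Your proposal is correct and takes essentially the same route as the paper: the paper proves Theorem~\ref{th:coset1sqrt} by repeating the proof of Theorem~\ref{th:coset1} starting from \rf{simple1}, except that part (i) of Lemma~\ref{le:mart1} is replaced by part (ii), whose proof is exactly your argument --- the observation that truncation can occur only when the genuine oracle noise exceeds $\lambda$ (the paper's Lemma~\ref{lem:cut}), giving the bias, range and conditional-variance bounds, followed by Freedman's inequality, which yields the linear-in-$\tau$ bounds $8(1+\tau)R\max\{\sigma\sqrt{N},M\}$ and $8(2+3\tau)\max\{N\sigma^2,M^2\}$. Your explicit bias-plus-martingale decomposition and the remark that the range term of the Bernstein bound is what turns $\sqrt{\tau}$ into $\tau$ are precisely the content of the paper's Lemma~\ref{le:mart1}(ii).
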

	The values of the numerical constants $C$ in Theorem~\ref{th:coset1sqrt} can be obtained from the proof,
	cf. the bound in {\eqref{c'c''}}.
	
	
	%
	\section{Robust Confidence Bounds for Stochastic Optimization Methods}\label{sec:rcert}
	
	Consider an arbitrary algorithm for solving the problem \rf{prob1} based on $ N $ queries of the stochastic oracle. Assume that we have a sequence
	$\big(x_{i}, G(x_{i},\omega_{i+1})\big),\;i=0,...,N$, where $x_i\in X$ are the search points of some stochastic algorithm and $G(x_{i},\omega_{i+1})$ are the corresponding observations of the stochastic gradient. It is assumed that
	$x_{i}$ depends only on $\{(x_{j-1},\omega_{j}), j=1,\dots,i\}$.
	The approximate solution of the problem \rf{prob1} is defined in the form:
	$$
	\wh x_N=N^{-1}\sum_{i=1}^{N}x_{i}.
	$$
	Our goal is to construct a confidence interval with sub-Gaussian accuracy for $F(\wh x_N)-F_*$.
	To do this, we use the following fact. Note that for any
	$t\geq L$ the value
	\bqn
	~~~~~\epsilon_N(t)=N^{-1}\sup_{z\in X}\left\{\sum_{i=1}^{N}\big[\langle \nabla \phi(x_{i-1}),x_{i}-z\rangle+\psi(x_{i})-\psi(z)+ t V_{x_{i-1}}(x_i)\big]\right\}
	\eqn{eq:zazor}
	is an upper bound on the accuracy of the approximate solution $\wh x_N$:
	\bqn
	F(\wh x_N)-F_*\leq \epsilon_N(t)
	\eqn{eq:upperb}
	(see Lemma \ref{lem:smooth} in Appendix).
	This fact is true for any sequence of points $x_0,\dots, x_N$ in $X$, regardless of how they are obtained. However, since the function $\nabla \phi(\cdot)$ is not known, the estimate \rf{eq:upperb} cannot be used in practice. Replacing the gradients $\nabla \phi(x_{i-1})$ in \rf{eq:zazor} with their truncated estimates $ y_i $ defined in \rf{t_grad0} we get an implementable analogue of $ \epsilon_N(t) $:
	\bqn
	~~~~~\wh\epsilon_N(t)=N^{-1}\sup_{z\in X}\left\{\sum_{i=1}^{N}\big[\langle y_i,x_{i}-z\rangle+\psi(x_{i})-\psi(z)+ tV_{x_{i-1}}(x_i)\big]\right\}.
	\eqn{epsz}
	Note that computing $\wh\epsilon_N(t)$ reduces to solving a problem of the form \rf{prox} with $ \beta = 0 $. Thus, it is computationally not more complex than, for example, one step of the RSMD algorithm.
	Replacing $ \nabla \phi(x_{i-1}) $ with $ y_i $ introduces a random error.  In order to get a reliable upper bound for $ \epsilon_N(t)$, we need to compensate this error by slightly increasing $ \wh \epsilon_N(t) $. Specifically, we add to $ \wh\epsilon_N(t) $ the value
	\bse
	\bar\rho_N(\tau)&=&
	4R\sqrt{5\Theta\max\{N\sigma^2,M^2\tau\}}
	+16 R\max\{\sigma\sqrt{N\tau},M\tau\}+\nn
	&&+\min_{\mu\geq 0}\bigg\{ 20\mu\max\{N\sigma^2,M^2\tau\}+\mu^{-1}\sum_{i=1}^NV_{x_{i-1}}(x_i)\bigg\},
	\ese
	where $\tau>0$.
	
	\begin{proposition}\label{pro:certif}
		Let $\big(x_i,G(x_i, \omega_{i+1})\big)_{i=0}^N$ be the trajectory of a stochastic algorithm for which $ x_{i} $ depends only on $\{(x_{j-1},\omega_{j}), j=1,\dots,i\}$. Let $0<\tau\leq N/\upsilon^2$ and let $y_i=y_i(\tau)$ be truncated stochastic gradients defined in
		\rf{t_grad0}, where the threshold $\lambda=\lambda(\tau)$ is chosen in the form \rf{eq:lambda}. Then for any $t\ge L$ the value
		\[
		\Delta_N(\tau,t)=\wh\epsilon_N(t)+\bar\rho_N(\tau)/N
		\]
		is an upper bound for $\epsilon_N(t)$ with
		probability $1-2\mathrm{e}^{-\tau}$, so that
		\[
		\Prob\big\{F(\wh x_N)-F_*\leq \Delta_N(\tau,t)\big\}\ge 1-2\mathrm{e}^{-\tau}.
		\]
	\end{proposition}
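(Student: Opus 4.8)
The plan is to begin from the deterministic inequality $F(\wh x_N)-F_*\le\epsilon_N(t)$ supplied by Lemma~\ref{lem:smooth} and to show that on an event of probability at least $1-2\mathrm{e}^{-\tau}$ one has $\epsilon_N(t)\le\wh\epsilon_N(t)+\bar\rho_N(\tau)/N$. The elementary but decisive observation is that, writing $\nabla\phi(x_{i-1})=y_i-\xi_i$ with $\xi_i=y_i-\nabla\phi(x_{i-1})$, the objective defining $\epsilon_N(t)$ in \rf{eq:zazor} differs from the one defining $\wh\epsilon_N(t)$ in \rf{epsz} only through the term $\sum_i\langle\xi_i,z-x_i\rangle$; in particular the proximal terms $tV_{x_{i-1}}(x_i)$ cancel and the bound will be uniform in $t\ge L$. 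Subadditivity of the supremum then gives
\[
N\epsilon_N(t)\le N\wh\epsilon_N(t)+\sup_{z\in X}\sum_{i=1}^N\langle\xi_i,z-x_i\rangle,
\]
so everything reduces to controlling the last supremum by $\bar\rho_N(\tau)$, whose three summands will account for three distinct contributions.

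First I would split $z-x_i=(z-x_{i-1})+(x_{i-1}-x_i)$. In the drift sum $\sum_i\langle\xi_i,x_{i-1}-x_i\rangle$ I apply Fenchel--Young together with the strong-convexity estimate $\tfrac12\|x_i-x_{i-1}\|^2\le V_{x_{i-1}}(x_i)$ (coefficient $1$) to obtain, for every $\mu>0$, a bound $\tfrac{\mu}{2}\sum_i\|\xi_i\|_*^2+\mu^{-1}\sum_iV_{x_{i-1}}(x_i)$. A high-probability control of $\sum_i\|\xi_i\|_*^2$ by $C\max\{N\sigma^2,M^2\tau\}$, obtained from a Bernstein-type inequality for the bounded, mean-$\le\sigma^2$ summands $\|\xi_i\|_*^2$ (the truncation \rf{t_grad0} forces $\|\xi_i\|_*\le C\lambda$), followed by optimization over $\mu$, reproduces the third term of $\bar\rho_N(\tau)$. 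For the remaining sum $\sum_i\langle\xi_i,z-x_{i-1}\rangle$ I decompose $\xi_i=b_i+\zeta_i$ into the conditional mean $b_i=\bE\{\xi_i\mid\omega_1,\dots,\omega_{i-1}\}$ (the truncation bias) and the martingale increment $\zeta_i=\xi_i-b_i$.

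The bias is handled by the key truncation argument: combining the Lipschitz property~\rf{Lipf}, the assumption~\rf{eq:8a} on $g(\bar x)$, and the choice~\rf{eq:lambda} of $\lambda$, one checks that the event $\{y_i\ne G(x_{i-1},\omega_i)\}$ forces $\|G(x_{i-1},\omega_i)-\nabla\phi(x_{i-1})\|_*>\lambda$, which is rare under \rf{eq:soracle}; a Chebyshev-type estimate then yields $\|b_i\|_*\le C\sigma^2/\lambda$, so that $\sup_z\sum_i\langle b_i,z-x_{i-1}\rangle\le 2R\sum_i\|b_i\|_*\le CR\,\sigma\sqrt{N\tau}$ by \rf{eq:lambda}, feeding the sub-Gaussian second term of $\bar\rho_N(\tau)$. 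For the martingale part I write $z-x_{i-1}=(z-x_0)-(x_{i-1}-x_0)$, which separates a vector term $\sup_z\langle\sum_i\zeta_i,z-x_0\rangle\le R\|\sum_i\zeta_i\|_*$ from a scalar martingale $-\sum_i\langle\zeta_i,x_{i-1}-x_0\rangle$. The scalar martingale, whose increments are bounded by $2R\|\zeta_i\|_*$ and whose predictable variance is of order $R^2N\sigma^2$, is controlled by a martingale Bernstein inequality, giving the mixed $\max\{\sigma\sqrt{N\tau},M\tau\}$ form that completes the second term; and $R\|\sum_i\zeta_i\|_*$ is bounded through a non-Euclidean concentration built on the proxy function $\theta$, of the self-normalized form $\|\sum_i\zeta_i\|_*\le C\sqrt{\Theta\,\max\{\sum_i\|\zeta_i\|_*^2,\,C'\lambda^2\tau\}}$, which together with the earlier bound on $\sum_i\|\zeta_i\|_*^2$ yields exactly the first term $4R\sqrt{5\Theta\max\{N\sigma^2,M^2\tau\}}$.

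The main obstacle is this last vector-valued concentration: one must bound $\|\sum_i\zeta_i\|_*$ in the general conjugate norm $\|\cdot\|_*$ with a sub-Gaussian/sub-exponential tail rather than merely in expectation. I would do this by the exponential-supermartingale method adapted to the proxy setting, exploiting the $2$-smoothness of the conjugate $\theta^*$ (with smoothness constant governed by $\Theta$) to dominate the increments of $\exp(\gamma\|\sum_{j\le i}\zeta_j\|_*)$, and using $\|\zeta_i\|_*\le C\lambda$ to treat the large-deviation regime; this is precisely where the factor $\sqrt{\Theta}$ enters. Since these estimates already underlie the proof of Theorem~\ref{th:coset1}, I expect the concentration lemmas established there to be reusable with little change. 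Finally I would allocate the failure probability among the bias, scalar-martingale, vector-martingale, and quadratic-sum events so that a union bound yields the total $2\mathrm{e}^{-\tau}$, exactly as in the proof of Theorem~\ref{th:coset1}, and combine everything with $F(\wh x_N)-F_*\le\epsilon_N(t)\le\Delta_N(\tau,t)$ to conclude.
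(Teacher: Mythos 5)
Your overall skeleton coincides with the paper's: you reduce everything to showing that $\sup_{z\in X}\sum_{i=1}^N\langle\xi_i,z-x_i\rangle\le\bar\rho_N(\tau)$ on an event of probability $1-2\mathrm{e}^{-\tau}$ (this is exactly the paper's Lemma~\ref{lem:cert1}), and your treatment of the drift term $\sum_i\langle\xi_i,x_{i-1}-x_i\rangle$ (Fenchel--Young plus $V_{x_{i-1}}(x_i)\ge\frac12\|x_i-x_{i-1}\|^2$, then Bernstein for $\sum_i\|\xi_i\|_*^2$ via Lemma~\ref{lem:cut}) is precisely what the paper does, cf.\ \rf{eq:sbound}. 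The divergence is in the remaining term $\sup_z\sum_i\langle\xi_i,z-x_{i-1}\rangle$. The paper handles it \emph{deterministically} by Lemma~6.1 of \cite{nemirovski2009robust} (the auxiliary ``virtual mirror descent'' sequence $z_{i-1}$, the same device as in inequality \rf{simple1}): for every $\nu>0$, the supremum over $z$ is absorbed into $\nu^{-1}V_{x_0}(z)+\frac{\nu}{2}\sum_i\|\xi_i\|_*^2+\sum_i\langle\xi_i,z_{i-1}-x_{i-1}\rangle$, and the last sum --- a scalar (biased) martingale with predictable coefficients --- is exactly what Lemma~\ref{le:mart1} controls, cf.\ \rf{eq:zbound}; optimizing over $\nu$ then produces the term $4R\sqrt{5\Theta\max\{N\sigma^2,M^2\tau\}}$. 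You instead split $\xi_i$ into bias plus martingale increment and bound the supremum by $R\|\sum_i\zeta_i\|_*$, which forces you to prove a sub-Gaussian concentration inequality for a vector-valued martingale in the general dual norm $\|\cdot\|_*$.

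That last step is where the genuine gap lies. Your justification --- that ``these estimates already underlie the proof of Theorem~\ref{th:coset1}'' and the lemmas there are ``reusable with little change'' --- is factually wrong: nowhere does the paper bound $\|\sum_i\zeta_i\|_*$; all of its concentration statements (Lemma~\ref{le:mart1}) are scalar Bernstein bounds, and the uniformity in $z$ is obtained for free from the deterministic Lemma~6.1 device. So the vector inequality you invoke, $\|\sum_i\zeta_i\|_*\le C\sqrt{\Theta\max\{\sum_i\|\zeta_i\|_*^2,C'\lambda^2\tau\}}$ (up to a martingale term), is a new lemma that you would have to prove from scratch. The route you sketch (exponential supermartingale built on the $1$-smooth conjugate $\theta^*$, whose gradient lies in $B$) can indeed be made to work --- it is essentially the dual form of the Lemma~6.1 argument --- but it is only a sketch, and it hides a real subtlety: the rescaling parameter in the smoothed-norm recursion (your effective ``$\rho$'') enters the definition of the accompanying scalar martingale, so it must be fixed deterministically in advance (or a union bound over a grid taken); it cannot be optimized pathwise as your self-normalized formulation suggests. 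Finally, your probability budget does not close: you need four deviation events (bias is deterministic, but quadratic sum, scalar martingale, and vector martingale each cost a Bernstein application), and with each at level $\mathrm{e}^{-\tau}$ the union bound gives $3\mathrm{e}^{-\tau}$--$4\mathrm{e}^{-\tau}$, not $2\mathrm{e}^{-\tau}$; compensating by inflating $\tau$ or the thresholds changes the constants, whereas the statement you must prove fixes $\bar\rho_N(\tau)$ with the specific constants $4\sqrt5$, $16$, $20$ that come out of the paper's two-event argument. As written, your proof would therefore establish the proposition only with a strictly larger remainder term and/or failure probability.
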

	
	Since $\Delta_N(\tau,t)$ monotonically increases in $t$ it suffices to use this bound for $t = L$  when $L$ is known. Note that, although $\Delta_N(\tau,t)$ gives an upper bound for $ \epsilon_N(t) $, Proposition \ref{pro:certif} does not guarantee that $\Delta_N(\tau,t)$ is sufficiently close to $ \epsilon_N(t)$. However, this property holds for the RSMD algorithm with a constant step, as follows from the next result.
	
	\begin{corollary}\label{cor:smdb} Under the conditions of Proposition~\ref{pro:certif}, let the vectors $ x_0, \dots, x_N $ be given by the  RSMD recursion \rf{eq:md1}--\rf{t_grad0}, where $ \beta_i = \bar \beta \ge 2L $, $ i = 0, ..., N-1 $. Then
		\bqn
		\bar\rho_N(\tau)&\leq &N\epsilon_N(\bar\beta)+4R\sqrt{5\Theta\max\{N\sigma^2,M^2\tau\}}+\nn
		&&+16 R\max\{\sigma\sqrt{N\tau},M\tau\}+20\bar\beta^{-1}\max\{N\sigma^2,M^2\tau\}.
		\eqn{barrho}
		Moreover,  if $\bar\beta\geq \max\left\{2L,{\sigma\sqrt{N}\over R\sqrt{\Theta}}\right\}$ then
		\[
		\bar\rho_N(\tau)\leq N\epsilon_N(\bar\beta)+C_3LR^2[\Theta\vee\tau]+{C_4\sigma R\sqrt{N[\Theta\vee \tau]}},
		\]
		and with probability at least $1-4\mathrm{e}^{-\tau}$ the value $\Delta_N(\tau,\bar\beta)$ satisfies the inequalities
		\bqn
		\epsilon_N(\bar\beta)\leq \Delta_N(\tau,\bar\beta)\leq 3\epsilon_N(\bar\beta)+2C_3{LR^2[\Theta\vee\tau]\over N}+2C_4{\sigma R\sqrt{[\Theta\vee \tau]\over N}},
		\eqn{eq:jnm}
		where $C_3>0$ and $C_4>0$ are numerical constants.	
	\end{corollary}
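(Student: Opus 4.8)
The only real difficulty lies in the first assertion, which converts the abstract minimum over $\mu$ appearing in $\bar\rho_N(\tau)$ into the $\epsilon_N$-based bound \rf{barrho}. Write $A=\max\{N\sigma^2,M^2\tau\}$ and $S=\sum_{i=1}^N V_{x_{i-1}}(x_i)$, so that the last term of $\bar\rho_N(\tau)$ equals $\min_{\mu\ge0}\{20\mu A+\mu^{-1}S\}$ and \rf{barrho} amounts to bounding it by $N\epsilon_N(\bar\beta)+20\bar\beta^{-1}A$. The key structural remark is that, because $V_{x_{i-1}}(x_i)$ does not depend on the supremum variable $z$ in \rf{eq:zazor}, the map $t\mapsto N\epsilon_N(t)$ is affine with slope $S$, namely $N\epsilon_N(t)=N\epsilon_N(0)+tS$. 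I would then use $\epsilon_N(L)\ge F(\wh x_N)-F_*\ge0$ (this is \rf{eq:upperb}, i.e. Lemma~\ref{lem:smooth} at $t=L$) together with $\bar\beta\ge2L$ to control $\bar\beta S$ by a multiple of $N\epsilon_N(\bar\beta)$, and finally take $\mu=\bar\beta^{-1}$ in the minimum, which produces $20\bar\beta^{-1}A+\bar\beta S$ and hence \rf{barrho}.

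The main obstacle is precisely this control of $\bar\beta S$ by $N\epsilon_N(\bar\beta)$ with the constant displayed in \rf{barrho}: the affine identity gives $N\epsilon_N(\bar\beta)=N\epsilon_N(0)+\bar\beta S$, so one needs $N\epsilon_N(0)\ge0$, i.e. that the supremum $\sup_{z\in X}\sum_i[\la\nabla\phi(x_{i-1}),x_i-z\ra+\psi(x_i)-\psi(z)]$ is nonnegative. Since $V_{x_{i-1}}(x_i)\ge0$, the crude estimate $\epsilon_N(L)\ge0$ only yields $N\epsilon_N(0)\ge-LS$ and hence $\bar\beta S\le 2N\epsilon_N(\bar\beta)$ through the gap $\bar\beta-L\ge\bar\beta/2$; obtaining the stated constant requires the sharper sign information on $N\epsilon_N(0)$, and this is the step I expect to cost the most care.

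For the second assertion I would simply substitute the two lower bounds $\bar\beta\ge2L$ and $\bar\beta\ge\sigma\sqrt N/(R\sqrt\Theta)$ into the three explicit terms of \rf{barrho} and regroup. Using $M=LR$ and $\sqrt{ab}\le a\vee b$, the term $4R\sqrt{5\Theta A}$ contributes a piece of order $\sigma R\sqrt{N\Theta}$ and a piece $LR^2\sqrt{\Theta\tau}\le LR^2[\Theta\vee\tau]$; the term $16R\max\{\sigma\sqrt{N\tau},M\tau\}$ contributes $\sigma R\sqrt{N[\Theta\vee\tau]}$ and $LR^2\tau$; and in $20\bar\beta^{-1}A$ the part $\bar\beta^{-1}N\sigma^2$ is bounded using $\bar\beta\ge\sigma\sqrt N/(R\sqrt\Theta)$ by $\sigma R\sqrt{N\Theta}$, while the part $\bar\beta^{-1}M^2\tau$ is bounded using $\bar\beta\ge2L$ by $\tfrac12 LR^2\tau$. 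Collecting the two groups yields the bound with numerical constants $C_3,C_4$.

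Finally, the two-sided inequality \rf{eq:jnm} follows by pairing Proposition~\ref{pro:certif} with its mirror image. The left inequality $\epsilon_N(\bar\beta)\le\Delta_N(\tau,\bar\beta)$ is Proposition~\ref{pro:certif} at $t=\bar\beta$, valid on an event of probability at least $1-2\mathrm{e}^{-\tau}$. For the right inequality, note that $\wh\epsilon_N(\bar\beta)$ and $\epsilon_N(\bar\beta)$ differ only by the substitution of $y_i$ for $\nabla\phi(x_{i-1})$, so $\wh\epsilon_N(\bar\beta)-\epsilon_N(\bar\beta)\le N^{-1}\sup_{z\in X}\sum_i\la y_i-\nabla\phi(x_{i-1}),x_i-z\ra$; this is exactly the martingale deviation that $\bar\rho_N(\tau)$ dominates in the proof of Proposition~\ref{pro:certif}, and since that bound is two-sided it gives $\wh\epsilon_N(\bar\beta)\le\epsilon_N(\bar\beta)+\bar\rho_N(\tau)/N$ on a companion event of probability at least $1-2\mathrm{e}^{-\tau}$. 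On the intersection, of probability at least $1-4\mathrm{e}^{-\tau}$, one then has $\Delta_N(\tau,\bar\beta)=\wh\epsilon_N(\bar\beta)+\bar\rho_N(\tau)/N\le\epsilon_N(\bar\beta)+2\bar\rho_N(\tau)/N$, and inserting the deterministic bound $\bar\rho_N(\tau)/N\le\epsilon_N(\bar\beta)+C_3LR^2[\Theta\vee\tau]/N+C_4\sigma R\sqrt{[\Theta\vee\tau]/N}$ just proved reproduces the factor $3=1+2$ and the doubled constants of \rf{eq:jnm}.
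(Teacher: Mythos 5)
Your strategy is the paper's strategy, part for part: take a specific $\mu$ in the minimum defining $\bar\rho_N(\tau)$ and control $\bar\beta\sum_i V_{x_{i-1}}(x_i)$ by $N\epsilon_N(\bar\beta)$ to get \rf{barrho}; substitute the two lower bounds on $\bar\beta$ (with $M=LR$) to get the $C_3,C_4$ display; and obtain \rf{eq:jnm} by intersecting the event of Proposition~\ref{pro:certif} (part (a) of Lemma~\ref{lem:cert1}) with the event of the mirror inequality (part (b)), which yields $\Delta_N(\tau,\bar\beta)=\wh\epsilon_N(\bar\beta)+\bar\rho_N(\tau)/N\leq \epsilon_N(\bar\beta)+2\bar\rho_N(\tau)/N$ with probability $1-4\mathrm{e}^{-\tau}$ and hence the factor $3=1+2$. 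So the whole comparison hinges on the single step you flag as the ``main obstacle.''

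On that step you are more careful than the paper, not less. The paper's proof simply asserts, ``from the definition of $\epsilon_N$,'' that $\bar\beta\sum_i V_{x_{i-1}}(x_i)\leq N\epsilon_N(\bar\beta)$ --- exactly the claim $N\epsilon_N(0)\geq 0$ you identify as the missing ingredient --- and gives no argument. Your suspicion that this needs care is justified: the claim is false in general, even along RSMD trajectories. For instance, take $X=[-1,1]$, $\psi=0$, $\phi(x)=\frac{L}{2}x^2$, $\bar\beta=2L$, and a noise realization driving $x_0=1\mapsto x_1=-1\mapsto x_2=1$ (achievable under the truncation rule once $\lambda\geq 3L$); then $\nabla\phi(x_0)+\nabla\phi(x_1)=0$, so $\sup_{z\in X}\sum_{i=1}^{2}\la \nabla\phi(x_{i-1}),x_i-z\ra=L(x_1-x_2)=-2L<0$, and indeed $\bar\beta\sum_i V_{x_{i-1}}(x_i)>N\epsilon_N(\bar\beta)$ there. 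Your factor-two bound $\bar\beta\sum_i V_{x_{i-1}}(x_i)\leq 2N\epsilon_N(\bar\beta)$, proved from $\epsilon_N(L)\geq F(\wh x_N)-F_*\geq 0$ and $\bar\beta-L\geq\bar\beta/2$, is the correct statement. The only thing you miss is that it already suffices: instead of $\mu=\bar\beta^{-1}$ choose $\mu=2\bar\beta^{-1}$ in the minimum, so that
\[
\min_{\mu\geq 0}\Big\{ 20\mu\max\{N\sigma^2,M^2\tau\}+\mu^{-1}\sum_{i=1}^NV_{x_{i-1}}(x_i)\Big\}
\leq 40\bar\beta^{-1}\max\{N\sigma^2,M^2\tau\}+\mbox{$\bar\beta\over 2$}\sum_{i=1}^NV_{x_{i-1}}(x_i)
\leq 40\bar\beta^{-1}\max\{N\sigma^2,M^2\tau\}+N\epsilon_N(\bar\beta),
\]
which is \rf{barrho} with $40$ in place of $20$. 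This change is harmless downstream: it only inflates the unspecified numerical constants $C_3,C_4$, while the coefficient $1$ in front of $N\epsilon_N(\bar\beta)$ --- and therefore the factor $3$ and the probability $1-4\mathrm{e}^{-\tau}$ in \rf{eq:jnm} --- are preserved. In short, your proof, completed by this one-line choice of $\mu$, establishes the corollary up to the value of one explicit constant, and at the critical step it is sounder than the paper's own argument.
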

	
	The values of the numerical constants $ C_3 $ and $ C_4 $ can be derived from the proof of this corollary.
	
	
	\section{Robust Confidence Bounds for Quadratic Growth Problems}
	\label{sec:strong}

	In this section, it is assumed that $ F $ is a  function with quadratic growth on $ X $ in the following sense (cf. \cite{necoara2018linear}). Let $ F $ be a continuous function on $ X $ and let $X_*\subset X$ be the set of its minimizers on $ X $.
	Then $ F $ is called a {\em function with quadratic growth  on $ X $} if there is a constant $ \kappa> 0 $ such that for any $x\in X$ there exists
	$\bar{x}(x)\in X_*$ such that the following inequality holds:
	\bqn
	F(x)-F_*\geq {\kappa\over 2}\|x-\bar{x}(x)\|^2.
	\eqn{eq:qmin}
	
	Note that every strongly convex function  $F$ on $ X $  with the strong convexity coefficient $ \kappa $ is a function with quadratic growth on $ X $.
	However, the assumption of strong convexity, when used together with the Lipschitz condition with constant $L$ on the gradient of $F$, has the disadvantage that, except for the case when $\|\cdot\|$ is the Euclidean norm, the ratio $ L/\kappa $ depends on the dimension of the space $E$.
	For example, in the important cases where $\|\cdot\|$ is the $\ell_1$-norm, the nuclear norm, the total variation norm, etc., one can easily check (cf. \cite{juditsky2014deterministic}) that there are no functions with Lipschitz continuous  gradient such that the ratio $L/\kappa$ is smaller than the dimension of the space.
	Replacing the strong convexity with the growth condition \rf{eq:qmin}  eliminates this problem, see the examples in \cite{necoara2018linear}.
	On the other hand,
	assumption \rf{eq:qmin} is quite natural in the composite optimization problem since in many interesting examples the function $\phi$ is smooth and the non-smooth part $\psi$ of the objective function is strongly convex.
	In particular, if $ E = \bR^n $ and the norm is  the $\ell_1$-norm, this allows us to consider such strongly convex components as
	the negative entropy $\psi(x)=\kappa \sum_j x_j\ln x_j$
	(if $X$ is standard simplex in $\bR^n$),
	$\psi(x)=
	\gamma(\kappa)
	\|x\|_p^p$ with  $1\leq p\leq 2$
	and with the corresponding choice of $ \gamma(\kappa)>0$
	(if $X$ is a convex compact in $\bR^n$) and others.
	In all these cases,
	condition  \rf{eq:qmin} is fulfilled with a known constant $\kappa$, which
	allows for the use of the
	approach of \cite{juditsky2014deterministic,OML2011
	} to improve the confidence bounds of the stochastic mirror descent.
	
	\par
	The RSMD algorithm for quadratically growing functions will be defined in stages.
	At each stage, for specially selected $ r> 0 $ and $ y \in X $ it solves  an auxiliary problem
	\[
	\min_{x\in X_{r}(y)} F(x)
	\]
	using the RSMD. Here
	\[
	X_r(y)=\{x\in X:\;\|x-y\|\leq r\}.
	\]
	We initialize the algorithm by choosing arbitrary $y_0=x_0\in X$ and $r_0\geq \max_{z\in X}\|z-x_0\|$. We set $r^2_k=2^{-k} r^2_0$, $k=1,2,...$. Let $ C_1 $ and $ C_2 $ be the numerical constants in the bound~\rf{eq:thm12} of Theorem~\ref{th:coset1}. For a given parameter $ \tau> 0 $, and $ k = 1,2, \dots $ we define the values
	\bqn
	\ovN_k=\max\left\{4C_1{L[\tau\vee\Theta]\over \kappa},\,16C_2{\sigma^2[\tau\vee\Theta]\over \kappa^2r^2_{k-1}}\right\},\;\;N_k=\rfloor \ovN_k\lfloor.
	\eqn{N_k}
	Here $\rfloor t\lfloor$ denotes the smallest integer greater than or equal to $t$. Set
	\[
	m(N):=\max\left\{k:\;\sum_{j=1}^k N_j\leq N\right\}.
	\]
	Now, let $k\in\{1,2,\dots,m(N)\}$.
	At the  $k$-th stage of the algorithm,
	we solve the problem of minimization of $F$ on the ball $X_{r_{k-1}}(y_{k-1})$, we find
	its approximate solution $\wh{x}_{N_k}$ according to
	\rf{eq:md1}--\rf{eq:asol}, where we replace
	$ x_0$ by $ y_{k-1} $, $X$ by $X_{r_{k-1}}(y_{k-1})$,
	$R$ by $r_{k-1}$, $N$ by $N_k$,  and set
	\[
	\lambda=\max\left\{\sigma\sqrt{N\over \tau}, Lr_{k-1}\right\}+\upsilon \sigma,
	\]
	and
	\[
	\beta_i \equiv \max\left\{2L, \,{\sigma\sqrt{N}\over r_{k-1}\sqrt{\Theta}}\right\}.
	\]
	It is assumed that, at each stage $k$ of the algorithm, an exact solution of the minimization problem
	$$
	\min_{z\in X_{r_{k-1}}(y_{k-1})}\left\{\langle a,z\rangle + \psi(z) + \beta \vartheta (z) \right\}
	$$
	is available for any $a\in E$ and $\beta>0$.
	At the output of the $k$-th stage of the algorithm, we obtain $y_k:=\wh{x}_{N_k}$.
	\begin{theorem}\label{th:strc}
		Assume that $ m(N) \geq 1 $, i.e. at least one stage of the algorithm described above is completed.
		Then there is a random event $\cB_N\subset\Omega^{\otimes N}$ of probability at least $1- 2m(N)e^{-\tau}$ such that for $\omega^N \in \cB_N$ the approximate solution $ y_{m(N)} $ after  $ m(N) $ stages of the algorithm satisfies the inequality
		\bqn \quad \qquad
		F(y_{m(N)})-F_*\leq C
		\max\left\{\kappa r_0^2\,
		{2^{-N/4}}
		,\;\kappa r_0^2\exp\left(-{C'
			\kappa N\over L[\tau\vee \Theta]}\right),\;{\sigma^2[\tau\vee \Theta]\over \kappa N}
		\right\}.
		\eqn{th2bound}
	\end{theorem}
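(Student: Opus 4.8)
The plan is to analyze the multistage restart scheme by a downward induction on the localization radius: on a suitable high‑probability event each stage shrinks the radius by the factor $\sqrt2$, yielding $F(y_{m(N)})-F_*\le\tfrac{\kappa}{2}r_{m(N)}^2$, after which a purely deterministic accounting of the iteration budget converts $r_{m(N)}^2=r_0^2\,2^{-m(N)}$ into the three‑term estimate \rf{th2bound}. First I would record that $N_k$, and hence $m(N)$, are deterministic, since $r_{k-1}^2=2^{-(k-1)}r_0^2$ enters \rf{N_k} deterministically. For each $k\in\{1,\dots,m(N)\}$ let $\cA_{N_k}$ be the event supplied by Theorem~\ref{th:coset1} applied to the $k$‑th auxiliary problem on $X_{r_{k-1}}(y_{k-1})$ under the substitutions $x_0\to y_{k-1}$, $R\to r_{k-1}$, $N\to N_k$ and the prescribed $\lambda,\bar\beta$; the hypotheses of that theorem hold at each stage (the requirement $\tau\le N_k/\upsilon^2$ is vacuous when $\upsilon=0$ and otherwise follows from the lower bound on $N_k$ in \rf{N_k}). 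Setting $\cB_N=\bigcap_{k=1}^{m(N)}\cA_{N_k}$ gives $\Prob(\cB_N)\ge 1-2m(N)e^{-\tau}$ by the union bound. The inductive hypothesis at stage $k$ is that $X_*\cap X_{r_{k-1}}(y_{k-1})\ne\emptyset$, so that the minimum of $F$ over the ball equals $F_*$; the base case $k=1$ holds because $r_0\ge\max_{z\in X}\|z-x_0\|$ forces $X_{r_0}(y_0)=X$.

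The contraction step is carried out on $\cB_N$. Since a global minimizer lies in the ball, Theorem~\ref{th:coset1} yields
\[
F(y_k)-F_*\le \max\Big\{C_1\frac{Lr_{k-1}^2[\tau\vee\Theta]}{N_k},\ C_2\sigma r_{k-1}\sqrt{\frac{\tau\vee\Theta}{N_k}}\Big\}.
\]
The calibration \rf{N_k} is designed precisely so that $N_k\ge\ovN_k$ drives each of the two terms below $\tfrac{\kappa}{4}r_{k-1}^2$, whence $F(y_k)-F_*\le\tfrac{\kappa}{4}r_{k-1}^2=\tfrac{\kappa}{2}r_k^2$. Invoking the quadratic growth condition \rf{eq:qmin} at $x=y_k$ gives $\tfrac{\kappa}{2}\|y_k-\bar x(y_k)\|^2\le F(y_k)-F_*\le\tfrac{\kappa}{2}r_k^2$, i.e. $\|y_k-\bar x(y_k)\|\le r_k$ with $\bar x(y_k)\in X_*$. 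Thus a minimizer sits in $X_{r_k}(y_k)$, which is exactly the inductive hypothesis for stage $k+1$, and the induction closes. In particular, on $\cB_N$,
\[
F(y_{m(N)})-F_*\le \tfrac{\kappa}{2}r_{m(N)}^2=\tfrac{\kappa}{2}r_0^2\,2^{-m(N)}.
\]

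It remains to bound $2^{-m(N)}$ deterministically. Writing $A=4C_1L[\tau\vee\Theta]/\kappa$ and $B=16C_2\sigma^2[\tau\vee\Theta]/\kappa^2$, so that $\ovN_k=\max\{A,(B/r_0^2)2^{k-1}\}$ and $N_k\le\ovN_k+1$, maximality of $m=m(N)$ gives
\[
N<\sum_{j=1}^{m+1}N_j\le (m+1)(1+A)+\frac{2B}{r_0^2}\,2^{m},
\]
so at least one summand exceeds $N/2$. If $(m+1)(1+A)>N/2$ then $2^{-m}\le 2\cdot2^{-N/(2(1+A))}$, and since $2(1+A)\le\max\{4,4A\}$ this contributes the first term $\kappa r_0^2\,2^{-N/4}$ (small $A$) and the second term $\kappa r_0^2\exp(-C'\kappa N/(L[\tau\vee\Theta]))$ (large $A$) of \rf{th2bound}. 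If instead $(2B/r_0^2)2^{m}>N/2$ then $2^{-m}\le 4B/(Nr_0^2)$, and multiplying by $\tfrac{\kappa}{2}r_0^2$ produces $32C_2\sigma^2[\tau\vee\Theta]/(\kappa N)$, the third term. Taking the maximum over the two cases yields \rf{th2bound}.

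I expect the main obstacle to be this final bookkeeping rather than the contraction. The per‑stage counts $N_k$ grow geometrically and are rounded up by the ceiling in \rf{N_k}, so one must control $\sum_{j\le m+1}N_j$ carefully and cleanly disentangle the regime in which the budget is consumed by the $O(A)$ optimization cost of each restart (which yields the two exponentially decaying terms) from the regime in which the single most expensive final stage dominates (which yields the statistical term $\sigma^2[\tau\vee\Theta]/(\kappa N)$). By contrast, the contraction step is routine once one observes that \rf{N_k} forces the per‑stage accuracy below $\tfrac{\kappa}{2}r_k^2$ and that \rf{eq:qmin} then relocates a minimizer into the shrunken ball.
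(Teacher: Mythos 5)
Your proof is correct, and its first half --- the induction showing that on an event of probability at least $1-2m(N)e^{-\tau}$ each stage halves $r_k^2$ and keeps a global minimizer inside the current ball --- is essentially the paper's own argument (its ``Fact $I_k$''), with one small improvement in ordering: you first use the induction hypothesis to identify the ball's optimal value with $F_*$ and only then invoke the global quadratic-growth condition \rf{eq:qmin}, whereas the paper applies growth relative to the set $X_*^k$ of ball-minimizers and identifies $X_*^k\subseteq X_*$ afterwards. Where you genuinely diverge is the deterministic accounting that converts $2^{-m(N)}$ into \rf{th2bound}. The paper splits into the cases $\ovN_1\ge 1$ and $\ovN_1<1$, uses $N_k\le 2\ovN_k$ (valid only when $\ovN_k\ge 1$), and introduces auxiliary indices $\bar k$ and $k_*$ with separate geometric-sum arguments in each regime; you instead bound $N_k\le\ovN_k+1$, sum the geometric series once to get $N<(m+1)(1+A)+(2B/r_0^2)2^{m}$, and read off the three terms from a single two-case split. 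This is tighter bookkeeping: it handles the ceiling in \rf{N_k} uniformly, and it avoids the loose ending of the paper's case $2\le k_*\le N/4$, where the displayed bound $C\kappa r_0^2/N$ is not literally dominated by the right-hand side of \rf{th2bound} (since $\ovN_1<1$ forces $\kappa^2 r_0^2>16C_2\sigma^2[\tau\vee\Theta]$, one must keep the sharper quantity $\kappa r_{k_*}^2 2^{-m+k_*}$ and use $r_{k_*}^2\le 8C_2\sigma^2[\tau\vee\Theta]/\kappa^2$ to land on the statistical term); your case (ii) produces that term directly. One caveat, shared with the paper: your parenthetical claim that $\tau\le N_k/\upsilon^2$ ``follows from the lower bound on $N_k$'' is not automatic --- the bound $N_k\ge 4C_1L[\tau\vee\Theta]/\kappa$ gives $\tau\le N_k/\upsilon^2$ only if $4C_1L/\kappa\ge\upsilon^2$ (or trivially if $\upsilon=0$) --- so this hypothesis of Theorem~\ref{th:coset1} is in effect being assumed at each stage rather than verified, exactly as in the paper's own proof.
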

	
	Theorem \ref{th:strc} shows that, for  functions with quadratic growth, the deterministic error component can be significantly reduced
	-- it becomes exponentially decreasing in $N$. The stochastic error component is also significantly reduced. Note that the  factor $m(N)$ is of logarithmic order and has little effect on the probability of deviations. Indeed, it follows  from  \rf{N_k} that
	$m(N)\le C \ln \left(\frac{C' \kappa^2 r_0^2 N}{\sigma^2(\tau\vee\Theta)}\right)$.
	Neglecting this factor in the probability of deviations and considering the stochastic component of the error, we see that the confidence bound of Theorem~\ref{th:strc} is approximately sub-exponential rather than sub-Gaussian.
	
	\section{Conclusion}

	We have considered algorithms of smooth stochastic optimization when the  distribution of noise in observations
	has heavy tails. It is shown that by truncating the observed gradients with a suitable threshold one can construct confidence sets for the approximate solutions that are similar to those in the case of ``light tails''. It should be noted that the order of the deterministic error in the obtained bounds is suboptimal --- it is substantially greater than the optimal rates achieved by the accelerated algorithms
	\cite{ghadimi2012optimal,lan2012optimal}, namely, $O(LR^2N^{-2})$ in the case of convex objective function and $O(\exp(-N\sqrt{\kappa/L}))$ in the strongly convex case.
	On the other hand, the proposed approach cannot be used to obtain robust versions of the accelerated algorithms since applying it to such algorithms leads to accumulation of the bias caused by the truncation of the gradients.
	The problem of constructing accelerated robust stochastic algorithms with optimal guarantees remains open.


\appendix{\hfill{\em APPENDIX}
	
	\textbf{A.1.~Preliminary remarks.}
	We start with the following known result.
	\begin{lemma}\label{lem:smooth}
		Assume that $ \phi $ and $ \psi $ satisfy the assumptions of Section~\ref{sec:pstat}, and let $ x_0, \dots, x_N $ be some points of the set $X$.
		Define
		\[
		\varepsilon_{i+1}(z):=\langle \nabla \phi(x_i),x_{i+1}-z\rangle+\langle \psi'(x_{i+1}),x_{i+1}-z\rangle+LV_{x_i}(x_{i+1}).
		\]
		Then for any $z\in X$ the following inequality holds:
		\[
		F(x_{i+1})-F(z)\leq \varepsilon_{i+1}(z).
		\]
		In addition, for
		$\widehat{x}_N={1\over N}\sum_{i=1}^{N} x_i$ we have
		\[
		F(\widehat{x}_N)-F(z)\leq N^{-1}\sum_{i=1}^{N}[ F(x_i)-F(z)]\leq N^{-1}\sum_{i=0}^{N-1}\varepsilon_{i+1}(z).
		\]\end{lemma}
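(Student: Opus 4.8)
The plan is to prove the one-step inequality $F(x_{i+1})-F(z)\leq\varepsilon_{i+1}(z)$ first, and then deduce the averaged bound from convexity of $F$. Since $F=\phi+\psi$, I would treat the smooth part $\phi$ and the convex part $\psi$ separately and add the resulting estimates. For $\psi$, the subgradient inequality at $x_{i+1}$ gives immediately
\[
\psi(x_{i+1})-\psi(z)\leq\langle \psi'(x_{i+1}),x_{i+1}-z\rangle,
\]
which already accounts for the corresponding term in the definition of $\varepsilon_{i+1}(z)$.

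For the smooth part I would combine the descent lemma with convexity of $\phi$. The Lipschitz condition \rf{Lipf} yields the standard quadratic upper bound $\phi(x_{i+1})\leq \phi(x_i)+\langle \nabla\phi(x_i),x_{i+1}-x_i\rangle+\frac{L}{2}\|x_{i+1}-x_i\|^2$, while convexity of $\phi$ gives $\phi(x_i)-\phi(z)\leq\langle\nabla\phi(x_i),x_i-z\rangle$. Adding these two and combining the two inner products produces
\[
\phi(x_{i+1})-\phi(z)\leq\langle\nabla\phi(x_i),x_{i+1}-z\rangle+\frac{L}{2}\|x_{i+1}-x_i\|^2 .
\]

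The crux of the argument, and the step I expect to be the main point, is to replace the quadratic term $\frac{L}{2}\|x_{i+1}-x_i\|^2$ by the Bregman term $L\,V_{x_i}(x_{i+1})$ appearing in $\varepsilon_{i+1}(z)$. This is exactly where the strong convexity of the proxy function is used: since $\vartheta$ is strongly convex with coefficient $1$ with respect to $\|\cdot\|$ (property \rf{pp} transported from $\theta$ to $\vartheta$ through the scaling $\vartheta(x)=R^2\theta((x-x_0)/R)$), its Bregman divergence satisfies $V_{x_i}(x_{i+1})\geq\frac12\|x_{i+1}-x_i\|^2$, so that $\frac{L}{2}\|x_{i+1}-x_i\|^2\leq L\,V_{x_i}(x_{i+1})$. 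Adding the $\phi$- and $\psi$-estimates then gives $F(x_{i+1})-F(z)\leq\varepsilon_{i+1}(z)$, which is the first claim.

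Finally, I would obtain the averaged bound by convexity of $F$: Jensen's inequality applied to $\wh{x}_N=\frac1N\sum_{i=1}^N x_i$ gives $F(\wh{x}_N)\leq\frac1N\sum_{i=1}^N F(x_i)$, hence $F(\wh{x}_N)-F(z)\leq\frac1N\sum_{i=1}^N[F(x_i)-F(z)]$. Applying the one-step inequality to each index (taking $i+1\to i$) yields $\sum_{i=1}^N[F(x_i)-F(z)]\leq\sum_{i=0}^{N-1}\varepsilon_{i+1}(z)$, which is the desired chain of inequalities. The only routine verification is the transport of the strong-convexity constant from $\theta$ to $\vartheta$, but this is already recorded in the definitions of Section~\ref{sec:pstat}, so no genuine difficulty remains beyond the single smoothness-plus-Bregman step highlighted above.
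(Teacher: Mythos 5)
Your proof is correct and follows essentially the same route as the paper's: the subgradient inequality for $\psi$, the descent lemma plus convexity for $\phi$ combined into $\langle\nabla\phi(x_i),x_{i+1}-z\rangle$, the bound $\frac{L}{2}\|x_{i+1}-x_i\|^2\leq L\,V_{x_i}(x_{i+1})$ from $V_x(z)\geq\frac{1}{2}\|x-z\|^2$, and Jensen's inequality for the averaged statement. No gaps.
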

	\begin{proof} Using the property $V_{x}(z)\geq \half \|x-z\|^2$, the convexity of functions $\phi$ and $\psi$ and the Lipschitz condition on $\nabla\phi$ we get that, for any $z\in X$,
		\bse
		F(x_{i+1})-F(z)&=&[\phi(x_{i+1})-\phi(z)]+[\psi(x_{i+1})-\psi(z)]=\nn
		&=&[\phi(x_{i+1})-\phi(x_{i})]
		+[\phi(x_{i})-\phi(z)]+[\psi(x_{i+1})-\psi(z)]\leq\nn
		&\leq& [\langle \nabla \phi(x_i),x_{i+1}-x_i\rangle+LV_{x_i}(x_{i+1})]+\langle \nabla \phi(x_i),x_{i}-z\rangle
		+\psi(x_{i+1})-\psi(z)\leq\nn
		&\leq &
		\langle \nabla \phi(x_i),x_{i+1}-z\rangle+\langle \psi'(x_{i+1}),x_{i+1}-z\rangle+LV_{x_i}(x_{i+1})=\varepsilon_{i+1}(z).
		\ese 
		Summing up over $ i $ from 0 to $ N-1 $ and using the convexity of $ F $ we obtain the second result of the lemma. \qed
	\end{proof}
	
	In what follows, we denote by $\bE_{x_{i}}\{\cdot\}$ the conditional expectation for fixed $x_{i}$.
	
	\begin{lemma}\label{lem:cut}
		Let the assumptions of Section \ref{sec:pstat} be fulfilled and let $ x_i $ and $ y_i $ satisfy the RSMD recursion, cf. \rf{eq:md1} and \rf{t_grad0}. Then
		\begin{equation*}
		(a)\quad\quad\;\,\|\xi_i\|_*\leq  2(M+\upsilon\sigma)+\lambda,\quad\quad\quad\quad\quad{}
		\end{equation*}
		\begin{equation}\label{eq:lemxi}
		\quad\;		(b)\quad\quad\quad\|\bE_{x_{i-1}}\{\xi_i\}\|_*\leq (M+\upsilon\sigma)\left({\sigma\over \lambda}\right)^{2}+{\sigma^2\over \lambda},
		\end{equation}
		\begin{equation*}
		(c)\quad\quad\quad
		\left(\bE_{x_{i-1}}\{\|\xi_i\|_*^2\}\right)^{1/2}\leq \sigma+(M+\upsilon\sigma){\sigma\over \lambda}.
		\end{equation*}
	\end{lemma}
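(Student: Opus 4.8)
The plan is to work conditionally on $x_{i-1}$, so that only the fresh query $\omega_i$ is random, and to split according to whether the raw gradient $G:=G(x_{i-1},\omega_i)$ is kept or replaced by $g:=g(\bar x)$ in \rf{t_grad0}. Write $\nabla\phi:=\nabla\phi(x_{i-1})$ and let $A$ be the event $\{\|G-g\|_*\le L\|\bar x-x_{i-1}\|+\lambda+\upsilon\sigma\}$, on which $y_i=G$, so that $y_i=g$ on $A^c$. Two elementary facts drive everything. First, by the triangle inequality, the Lipschitz property \rf{Lipf}, and assumption \rf{eq:8a},
\[
\|\nabla\phi-g\|_*\le L\|\bar x-x_{i-1}\|+\|\nabla\phi(\bar x)-g\|_*\le L\|\bar x-x_{i-1}\|+\upsilon\sigma\le M+\upsilon\sigma,
\]
using $L\|\bar x-x_{i-1}\|\le M$. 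Second, and crucially, the truncation event is controlled by the deviation of $G$ from the \emph{true} gradient: if $\|G-\nabla\phi\|_*\le\lambda$ then $\|G-g\|_*\le\|G-\nabla\phi\|_*+\|\nabla\phi-g\|_*\le\lambda+L\|\bar x-x_{i-1}\|+\upsilon\sigma$, i.e. $A$ holds. Contrapositively $A^c\subseteq\{\|G-\nabla\phi\|_*>\lambda\}$, whence by Chebyshev and the variance bound \rf{eq:soracle} one gets $\Prob(A^c)\le\sigma^2/\lambda^2$ (all expectations and probabilities being conditional on $x_{i-1}$).

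Given these, (a) is immediate by cases: on $A$, $\|\xi_i\|_*=\|G-\nabla\phi\|_*\le\|G-g\|_*+\|g-\nabla\phi\|_*\le(L\|\bar x-x_{i-1}\|+\lambda+\upsilon\sigma)+(M+\upsilon\sigma)$; on $A^c$, $\|\xi_i\|_*=\|g-\nabla\phi\|_*\le M+\upsilon\sigma$; both are at most $2(M+\upsilon\sigma)+\lambda$. For (c) I would decompose the conditional second moment along $A,A^c$:
\[
\bE_{x_{i-1}}\{\|\xi_i\|_*^2\}=\bE_{x_{i-1}}\{\|G-\nabla\phi\|_*^2\fcar{A}\}+\|g-\nabla\phi\|_*^2\,\Prob(A^c)\le\sigma^2+(M+\upsilon\sigma)^2\frac{\sigma^2}{\lambda^2},
\]
bounding the first term by $\bE_{x_{i-1}}\{\|G-\nabla\phi\|_*^2\}\le\sigma^2$ and the second by the two facts above; taking square roots and using $\sqrt{a^2+c^2}\le a+c$ gives (c).

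The delicate part is the bias bound (b), which is where the unbiasedness of the oracle must be exploited rather than discarded. The key is to rewrite the conditional mean of the truncated noise using $\bE_{x_{i-1}}\{G-\nabla\phi\}=0$: since $\xi_i=(G-\nabla\phi)\fcar{A}+(g-\nabla\phi)\fcar{A^c}$ and $\bE_{x_{i-1}}\{(G-\nabla\phi)\fcar{A}\}=-\bE_{x_{i-1}}\{(G-\nabla\phi)\fcar{A^c}\}$, we obtain
\[
\bE_{x_{i-1}}\{\xi_i\}=-\bE_{x_{i-1}}\{(G-\nabla\phi)\fcar{A^c}\}+(g-\nabla\phi)\,\Prob(A^c).
\]
The second term has norm at most $(M+\upsilon\sigma)\sigma^2/\lambda^2$, the first of the two contributions in (b). For the first term I would upgrade the first absolute moment on the rare event to a second moment: on $A^c\subseteq\{\|G-\nabla\phi\|_*>\lambda\}$ we have $\|G-\nabla\phi\|_*<\|G-\nabla\phi\|_*^2/\lambda$, so $\bE_{x_{i-1}}\{\|G-\nabla\phi\|_*\fcar{A^c}\}\le\lambda^{-1}\bE_{x_{i-1}}\{\|G-\nabla\phi\|_*^2\}\le\sigma^2/\lambda$, the second contribution; summing yields (b).

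I expect (b) to be the main obstacle. Without exploiting the identity $\bE_{x_{i-1}}\{G-\nabla\phi\}=0$, the kept part $\bE_{x_{i-1}}\{(G-\nabla\phi)\fcar{A}\}$ is an integral over the large set $A$ and cannot be shown to be small; the cancellation that transfers it onto the rare event $A^c$ is what produces a bias of order $\sigma^2/\lambda$ (vanishing as $\lambda\to\infty$) rather than of order $\sigma$. The containment $A^c\subseteq\{\|G-\nabla\phi\|_*>\lambda\}$ and the tail-to-second-moment upgrade are the two ideas that let the whole argument run on the single second-moment hypothesis \rf{eq:soracle}.
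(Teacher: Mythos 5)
Your proof is correct and follows essentially the same route as the paper's: the containment of the truncation event in $\{\|G-\nabla\phi(x_{i-1})\|_*>\lambda\}$, the bound $\|g(\bar x)-\nabla\phi(x_{i-1})\|_*\le M+\upsilon\sigma$, and, for (b), the use of unbiasedness to shift the bias onto the rare event followed by Chebyshev and the tail-to-second-moment upgrade $\|G-\nabla\phi\|_*\fcar{A^c}\le\|G-\nabla\phi\|_*^2/\lambda$ are exactly the paper's ingredients (the paper packages them via $\xi_i=[G-\nabla\phi]+[g-G]\chi_i$ and bounds (c) by Minkowski rather than your exact disjoint-event decomposition, a cosmetic difference).
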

	
	
	\begin{proof}
		Set $\chi_i=1_{\|G(x_{i-1},\omega_i)-g(\bar{x})\|_*> L\|x_{i-1}-\bar{x}\|+\lambda+\upsilon\sigma}$.
		Note that by construction
		\\$\chi_i\leq \eta_i:= 1_{\|G(x_{i-1},\omega_i)-\nabla f(x_{i-1}\|_*> \lambda}$.
		We have
		\bse
		\xi_i&=&y_i-\nabla \phi(x_{i-1})=[G(x_{i-1},\omega_i)-\nabla \phi(x_{i-1})](1-\chi_i)+[g(\bar x)-\nabla \phi(x_{i-1})]\chi_i=\\
		&=&[G(x_{i-1},\omega_i)-g(\bar{x})](1-\chi_i)+[g(\bar{x})-\nabla \phi(x_{i-1})]=\\
		&=&[G(x_{i-1},\omega_i)-\nabla \phi(x_{i-1})]+[g(\bar{x})-G(x_{i-1},\omega_i)]\chi_i.
		\ese
		Therefore,
		\bse
		\|\xi_i\|_*\leq \|[G(x_{i-1},\omega_i)-g(\bar{x})](1-\chi_i)\|_*+\|g(\bar{x})-\nabla \phi(x_{i-1})\|_*\leq 2(M+\upsilon\sigma)+\lambda.
		\ese
		Moreover, since $\bE_{x_{i-1}}\{G(x_{i-1},\omega_i)\}=\nabla \phi(x_{i-1})$ we have
		\bse
		\|\bE_{x_{i-1}}\{\xi_i\}\|_*&=&\big\|\bE_{x_{i-1}}\{[(G(x_{i-1},\omega_i)-\nabla \phi(x_{i-1}))-(g(\bar{x})-\nabla \phi(x_{i-1}))]\chi_i\}\big\|_*\leq\\
		&\leq& \bE_{x_{i-1}}\{[\|G(x_{i-1},\omega_i)-\nabla \phi(x_{i-1})\|_*+\|g(\bar{x})-\nabla \phi(x_{i-1})\|_*]\chi_i\}\leq\\
		&\leq&
		\bE_{x_{i-1}}\{\|G(x_{i-1},\omega_i)-\nabla \phi(x_{i-1})\|_*\eta_i\big\}+(M+\upsilon\sigma)
		\bE_{x_{i-1}}\{\eta_i\}\leq\\
		&\leq& {\sigma^{2}\over \lambda}+(M+\upsilon\sigma)\left({\sigma\over \lambda}\right)^{2}.
		\ese
		Further,
		\[
		\|\xi_i\|_* \leq \|G(x_{i-1},\omega_i)-\nabla \phi(x_{i-1})\|_*(1-\chi)+\|g(\bar{x})-\nabla \phi(x_{i-1})\|_*\chi_i,
		\]
		and
		\bse
		\bE_{x_{i-1}}\{\|\xi_i\|_* ^2\}^{1/2}&\leq& \bE_{x_{i-1}}\{\|G(x_{i-1},\omega_i)-\nabla \phi(x_{i-1})\|_*^2\}^{1/2}+\\
		&&{}\quad\quad\quad\quad\quad\quad\quad
		+\,\bE_{x_{i-1}}\{\|g(\bar{x})-\nabla \phi(x_{i-1})\|^2_*\chi_i\}^{1/2}\leq\\
		&\leq& \sigma+(M+\upsilon\sigma)\bE\{\chi_i\}^{1/2}\leq \sigma+(M+\upsilon\sigma)\bE_{x_{i-1}}\{\eta_i\}^{1/2}\leq\\ &\leq&\sigma+(M+\upsilon\sigma){\sigma\over \lambda}.
		\ese\qed
	\end{proof}
	
	The following lemma gives bounds for the deviations of the sums $\sum_{i}\la \xi_{i},x_{i-1}-z\ra$ and $\sum_{i}\|\xi_i\|_*^2$.
	\begin{lemma}\label{le:mart1} Let the assumptions of Section~\ref{sec:pstat} be fulfilled and let $ x_i $ and $ y_i $ satisfy the recursion of RSMD, cf. \rf{eq:md1} and \rf{t_grad0}.
		\item{(i)} If $\tau\leq {N/\upsilon^2}$ and $\lambda=\max\left\{\sigma\sqrt{N\over \tau}, M\right\}+\upsilon \sigma$ then, for any $z\in X$,
		\bqn
		\Prob\left\{\sum_{i=1}^N\la \xi_{i},z-x_{i-1}\ra\geq 16R\max\{\sigma\sqrt{N\tau},M\tau\}\right\}\leq e^{-\tau},
		\eqn{eq:zbound}
		and
		\bqn
		\Prob\left\{\sum_{i=1}^N\|\xi_i\|_*^2\geq 40\max\{N\sigma^2,M^2\tau\}\right\}\leq e^{-\tau}.
		\eqn{eq:sbound}
		\item{(ii)} If  $N\geq \upsilon^2$ and $\lambda=\max\left\{\sigma\sqrt{N}, M\right\}+\upsilon \sigma$ then, for any $z\in X$,
		\bqn
		\Prob\left\{\sum_{i=1}^N\la \xi_{i},z-x_{i-1}\ra\geq 8(1+\tau)R\max\{\sigma\sqrt{N},M\}\right\}\leq e^{-\tau},
		\eqn{eq:zboundsqrt}
		and
		\bqn
		\Prob\left\{\sum_{i=1}^N\|\xi_i\|_*^2\geq 8(2+3\tau)\max\{N\sigma^2,M^2\}\right\}\leq e^{-\tau}.
		\eqn{eq:sboundsqrt}
	\end{lemma}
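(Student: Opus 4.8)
The plan is to treat both sums as sums of bounded martingale increments carrying a small systematic drift, and to apply a Bernstein--Freedman type deviation inequality. The starting point is the decomposition $\xi_i=\bar\xi_i+b_i$, where $b_i=\bE_{x_{i-1}}\{\xi_i\}$ is the (adapted) truncation bias and $\bar\xi_i=\xi_i-b_i$ satisfies $\bE_{x_{i-1}}\{\bar\xi_i\}=0$. Since $x_{i-1}$ depends only on $\omega_1,\dots,\omega_{i-1}$, the scalar increments $\la\bar\xi_i,z-x_{i-1}\ra$ form a martingale difference sequence for every fixed $z\in X$, and likewise $\|\xi_i\|_*^2-\bE_{x_{i-1}}\{\|\xi_i\|_*^2\}$ is a martingale difference sequence. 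Lemma~\ref{lem:cut} supplies exactly the three inputs the deviation inequality needs: the almost sure increment bound (a), the drift bound (b), and the conditional second-moment bound (c).

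For \rf{eq:zbound} I would first control the drift deterministically. By Lemma~\ref{lem:cut}(b) and $\|z-x_{i-1}\|\le D\le 2R$,
\[
\sum_{i=1}^N\la b_i,z-x_{i-1}\ra\le 2R\sum_{i=1}^N\|b_i\|_*\le 2RN\big[(M+\upsilon\sigma)(\sigma/\lambda)^2+\sigma^2/\lambda\big].
\]
Inserting $\lambda\ge\sigma\sqrt{N/\tau}$ and $M+\upsilon\sigma\le\lambda$ turns the bracket into a term of order $\max\{\sigma\sqrt{N\tau},M\tau\}/N$, so the drift is $O\big(R\max\{\sigma\sqrt{N\tau},M\tau\}\big)$; the constraint $\tau\le N/\upsilon^2$ is what lets $\upsilon\sigma$ be absorbed into $\sigma\sqrt{N/\tau}$. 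For the martingale part I would apply Freedman's inequality to $\sum_i\la\bar\xi_i,z-x_{i-1}\ra$, whose predictable quadratic variation is bounded, via (c), by $\sum_iD^2\bE_{x_{i-1}}\{\|\xi_i\|_*^2\}\le 16NR^2\sigma^2$, and whose increments are bounded, via (a), by $|\la\bar\xi_i,z-x_{i-1}\ra|\le 2R\|\bar\xi_i\|_*\le CR\lambda$. Freedman's bound then yields a deviation of order $\sqrt{v\tau}+b\tau\sim R\sigma\sqrt{N\tau}+R\lambda\tau$, which again reduces to $O(R\max\{\sigma\sqrt{N\tau},M\tau\})$ after substituting $\lambda$. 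Summing drift and martingale contributions gives \rf{eq:zbound}.

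The estimate \rf{eq:sbound} is handled identically, splitting $\sum_i\|\xi_i\|_*^2$ into $\sum_i\bE_{x_{i-1}}\{\|\xi_i\|_*^2\}\le 4N\sigma^2$ (by (c), since $(M+\upsilon\sigma)/\lambda\le 1$) plus the martingale $\sum_i(\|\xi_i\|_*^2-\bE_{x_{i-1}}\{\|\xi_i\|_*^2\})$, whose increments are bounded by $\|\xi_i\|_*^2\le(2(M+\upsilon\sigma)+\lambda)^2\le C\lambda^2$ and whose quadratic variation satisfies $\sum_i\bE_{x_{i-1}}\{\|\xi_i\|_*^4\}\le C\lambda^2\sum_i\bE_{x_{i-1}}\{\|\xi_i\|_*^2\}\le C'N\lambda^2\sigma^2$; Freedman then gives a deviation of order $\lambda\sigma\sqrt{N\tau}+\lambda^2\tau$, collapsing to $O(\max\{N\sigma^2,M^2\tau\})$ once $\lambda=\max\{\sigma\sqrt{N/\tau},M\}+\upsilon\sigma$ is inserted. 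Part (ii) is proved the same way; the only structural change is that the threshold $\lambda=\max\{\sigma\sqrt N,M\}+\upsilon\sigma$ no longer shrinks with $\tau$, so the increment constant $b$ in Freedman's bound is $\tau$-independent and the $b\tau$ term produces the linear factor $(1+\tau)$ in \rf{eq:zboundsqrt} and the factor $(2+3\tau)$ in \rf{eq:sboundsqrt}, instead of the $\max\{\,\cdot\sqrt\tau,\,\cdot\,\tau\}$ shape of part (i).

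The difficulty here is bookkeeping rather than conceptual: one must check at each step that the truncation bias $b_i$ (absent for a genuine martingale) is dominated by the target bound, and one must keep the sub-Gaussian term $\sqrt{v\tau}$ and the sub-exponential term $b\tau$ of Freedman's inequality separated, so that the two regimes $\sigma\sqrt{N\tau}$ and $M\tau$ (respectively their part-(ii) analogues) emerge with the stated explicit constants.
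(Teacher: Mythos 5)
Your proposal follows essentially the same route as the paper's proof: the paper likewise extracts from Lemma~\ref{lem:cut} the three inputs you name (its quantities $r$, $A$, $s$ bounding the conditional drift, the almost-sure increment, and the conditional standard deviation), implicitly splits off the accumulated drift $Nr$, and applies the Bernstein--Freedman martingale inequality to the centered part, with the same case analysis over which term of the max realizes $\lambda$ and the same mechanism yielding the $\max\{\sigma\sqrt{N\tau},M\tau\}$ shape in part (i) versus the linear-in-$\tau$ factors in part (ii). The only difference is presentational: the paper carries the explicit constants through each case, which you defer as bookkeeping.
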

	
	\begin{proof}
		Set $\zeta_{i}= \la \xi_{i},z-x_{i-1}\ra$ and $\varsigma_i=\|\xi_i\|_*^2$, $i=1,2,\dots $
		Using Lemma~\ref{lem:cut} it is easy to check that the following inequalities are fulfilled
		\bqn
		\begin{array}{lrcl}
			(a)&\quad |\bE_{x_{i-1}} \{\zeta_i\}|&\leq& D\left[(M+\upsilon \sigma)(\sigma/\lambda)^2+\sigma^2/\lambda\right],\\
			(b)&\quad|\zeta_i|&\leq& D[2(M+\upsilon \sigma)+\lambda],\\
			(c)&\quad(\bE_{x_{i-1}} \{\zeta_i^2\})^{1/2}&\leq &D[\sigma+(M+\upsilon \sigma)\sigma/\lambda]\\
		\end{array}
		\eqn{eq:zeta1}
		and
		\bqn
		\begin{array}{lrcl}
			(a)\quad& \bE_{x_{i-1}}\{ \varsigma_i\}&\leq&[\sigma+(M+\upsilon \sigma)\sigma/\lambda]^2,\\
			(b)\quad&\varsigma_i&\leq&  [2(M+\upsilon\sigma)+\lambda]^2,\\
			(c)\quad&(\bE_{x_{i-1}}\{ \varsigma_i^2\})^{1/2}&\leq &[\sigma+(M+\upsilon \sigma)\sigma/\lambda]\,[2(M+\upsilon\sigma)+\lambda].
		\end{array}
		\eqn{eq:varsigma1}
		\par
		In what follows, we apply several times the Bernstein inequality, and each time we will use
		the same notation $r$, $A$, $s$ for the  values that are, respectively, the uniform upper bound of the expectation, the maximum absolute value, and the standard deviation of a random variable.
		\par
		\textbf{1$^o$.}
		We first prove the statement $(i)$.
		We start with the case $M\leq \sigma\sqrt{N\over \tau}$.  It follows from \rf{eq:zeta1} that in this case
		\bqn
		\begin{array}{rcl}
			|\bE_{x_{i-1}} \{\zeta_i\}|&\leq& 2D\sigma^2/\lambda \leq 4R\sigma\sqrt{\tau\over N}=:r,\\
			|\zeta_i|&\leq & A:=3\lambda D\leq 6R\lambda,\\
			(\bE_{x_{i-1}} \{\zeta_i^2\})^{1/2}&\leq &s:=2D\sigma\leq 4R\sigma.
		\end{array}
		\eqn{eq:zeta2}
		Using \rf{eq:zeta2} and Bernstein's inequality for martingales (see, for example, \cite{freedman1975tail}) we get
		\bse
		\Prob\left\{\sum_{i=1}^N\zeta_i\geq 16R\sigma\sqrt{N\tau}\right\}&\leq& \Prob\left\{\sum_{i=1}^N\zeta_i\geq Nr+3s \sqrt{N\tau}\right\}\leq
		\\&\leq&
		\exp\left\{-{9\tau\over 2+{2\over 3}{3\sqrt{\tau}A\over s\sqrt{N}}}\right\}
		\leq
		\\&\leq&
		\exp\left\{-{9\tau\over 2+	3\big(1+\upsilon\sqrt{\tau/N}\big)}
		\right\}\leq
		e^{-\tau}
		\ese
		for all $\tau>0$ satisfying the condition $\tau\leq {16N/(9\upsilon^2)}$.
		On the other hand, in the case under consideration, the following inequalities hold (cf. \rf{eq:varsigma1} and \rf{eq:zeta2})
		\[
		\begin{array}{rcl}
		\bE_{x_{i-1}} \{\varsigma_i\}\leq \underbrace{4\sigma^2}_{=:r},\;\;
		\varsigma_i\leq \underbrace{9\lambda^2}_{=:A},\;\;
		(\bE_{x_{i-1}} \{\varsigma_i^2\})^{1/2}\leq \underbrace{6\lambda\sigma}_{=:s}.
		\end{array}
		\]
		Thus,
		\[
		Nr+3s\sqrt{\tau N}=4N\sigma^2+18\lambda \sigma\sqrt{\tau N}=22N\sigma^2+18\upsilon \sigma^2\sqrt{N\tau}\leq 40N\sigma^2
		\]
		for $0<\tau\leq {N/\upsilon^2}$. Applying again the Bernstein inequality, we get
		\bse
		\Prob\left\{\sum_{i=1}^N\varsigma_i\geq 40N\sigma^2
		\right\}\leq \exp\left\{-{9\tau\over 2+	\big(3+3\upsilon\sqrt{\tau/N}\big)}
		\right\}\leq e^{-\tau}
		\ese
		for all $\tau>0$ satisfying the condition $\tau\leq N/\upsilon^2$.
		\par
		\textbf{2$^o$.} Assume now that $M> \sigma\sqrt{N\over \tau}$, so that $\lambda=M+\upsilon \sigma$ and $\sigma^2\leq {M^2\tau/N}$.
		Then
		\[
		|\bE_{x_{i-1}}\zeta_i|\leq 4R\sigma^2/\lambda\leq \underbrace{4RM\tau/N}_{=:r},\;\;
		|\zeta_i|\leq R(2(M+\upsilon\sigma)+\lambda)=\underbrace{6R(M+\upsilon\sigma)}_{=:A},
		\]
		\[(\bE_{x_{i-1}} \{\zeta_i^2\})^{1/2}\leq 4R\sigma\leq \underbrace{4MR\sqrt{\tau/N}}_{=:s}.
		\]
		Further,
		\[
		Nr+3s\sqrt{\tau N}=4RM\tau+12RM\tau=16RM\tau,
		\]
		and applying again the Bernstein inequality we get
		\bse
		\Prob\left\{\sum_{i=1}^N\zeta_i\geq 16RM\tau\right\}&\leq&
		\exp\left\{-{9\tau\over 2+{2\over 3}{3\sqrt{\tau}A\over s\sqrt{N}}}\right\}\leq
		\exp\left\{-{9\tau\over 2+	\big(3+3\upsilon\sigma/M\big)}\right\}\leq
		\\&\leq& \exp\left\{-{9\tau\over 5+3\upsilon\sqrt{\tau/N}}
		\right\}\leq e^{-\tau}
		\ese
		for all $\tau>0$, satisfying the condition $\tau\leq 16N/(9\upsilon^2)$.
		Next, in this case
		\[
		\begin{array}{rcl}
		\bE_{x_{i-1}} \{\varsigma_i\}\leq 4\sigma^2\leq \underbrace{4\tau M^2/N}_{=:r},\;\;
		\varsigma_i\leq \underbrace{9\lambda^2}_{=:A},\;\;
		(\bE_{x_{i-1}} \{\varsigma_i^2\})^{1/2}\leq \underbrace{6\lambda\sigma}_{=:s}\leq 6\lambda M\sqrt{\tau/N}.
		\end{array}
		\]
		Now,
		\[
		Nr+3s\sqrt{\tau N}=4\tau M^2+18\lambda\sigma \sqrt{\tau N}\leq 22M^2\tau +18\upsilon \sigma^2\sqrt{N\tau}\leq 40M^2\tau,
		\]
		for $\tau\leq {N/\upsilon^2}$. Applying once again the Bernstein inequality we get
		\bse
		\Prob\left\{\sum_{i=1}^N\varsigma_i\geq 40\tau M^2
		\right\}\leq \exp\left\{-{9\tau\over 2+
			\big(3+3\upsilon\sqrt{\tau/N}\big)}
		\right\}\leq e^{-\tau},
		\ese
		for all $\tau>0$ satisfying the condition  $\tau\leq N/\upsilon^2$.
		\par \textbf{3$^o$.} Now, consider the case $\lambda=\max\{M,\sigma\sqrt{N}\}+\sigma\upsilon$. Let $M\leq \sigma\sqrt{N}$, so that $\lambda=\sigma(\sqrt{N}+\upsilon)$. We argue in the same way as in the proof of $(i)$. By virtue of \rf{eq:zeta1} we have
		\[
		\begin{array}{rcl}
		|\bE_{x_{i-1}} \{\zeta_i\}|&\leq&4R{\sigma\over \sqrt{N}}=: r,\\
		(\bE_{x_{i-1}} \{\zeta_i^2\})^{1/2}&\leq & 4R\sigma=:s,\\
		|\zeta_i|&\leq &  6R\lambda\leq 12R\sigma\sqrt{N}=3s\sqrt{N}.
		\end{array}
		\]
		Hence, using the Bernstein inequality we get
		\bse
		&&	\Prob\left\{\sum_{i=1}^N\zeta_i\geq 8R\sigma\sqrt{N} (\tau+1)\right\}
		\leq \Prob\left\{\sum_{i=1}^N\zeta_i\geq Nr+(2\tau+1)s\sqrt{N}\right\}
		\leq
		\\&&\leq
		\exp\left\{-{(2\tau+1)^2s^2N\over 2s^2N+{2\over 3}3s^2{N}(2\tau+1)}\right\}\leq
		\exp\left\{-{(2\tau+1)^2\over 2+2(2\tau+1)}
		\right\}\leq e^{-\tau}.
		\ese
		From \rf{eq:varsigma1} we also have
		\[
		\begin{array}{rcl}
		\bE_{x_{i-1}} \{\varsigma_i\}&\leq& \underbrace{4\sigma^2}_{=:r},\\
		(\bE_{x_{i-1}} \{\varsigma_i^2\})^{1/2}&\leq& 6\lambda\sigma\leq 12\sigma^2\sqrt{N}=:s,\\
		\varsigma_i&\leq& 9\lambda^2\leq 36\sigma^2N=4s\sqrt{N}.
		\end{array}
		\]
		Now,  applying again the Bernstein inequality we get
		\bse\Prob\left\{\sum_{i=1}^N\varsigma_i\geq 16N\sigma^2 +24N\sigma^2\tau
		\right\}&=&\Prob\left\{\sum_{i=1}^N\varsigma_i\geq Nr+(2\tau+1)s\sqrt{N}
		\right\}\leq\\
		&\leq& \exp\left\{-{(2\tau+1)s^2N\over [2+2(2\tau+1)]s^2N}
		\right\}\leq e^{-\tau}.
		\ese
		Proofs of the bounds \rf{eq:zboundsqrt} and \rf{eq:sboundsqrt} in the case $M>\sigma\sqrt{N}$ and $\lambda=M+\sigma \upsilon$ follow the same lines.
		\qed
	\end{proof}
	
	\textbf{A.2.~Proof of Proposition~\ref{pr:muprop}.}
	We first prove  inequality \rf{simple0}. In view of \rf{prox},
	the optimality condition for  \rf{eq:md1} 
	has the form
	\[
	\langle y_{i+1}+\psi'(x_{i+1})+\beta_i[\vartheta'(x_{i+1})-\vartheta'(x_i)],z-x_{i+1}\rangle\ge 0,\;\;\forall \; z\in X,
	\]
	or, equivalently,
	\bse
	\langle y_i+\psi'(x_{i+1}),x_{i+1}-z\rangle&\leq& \beta_i\langle [\vartheta'(x_{i+1})-\vartheta'(x_i)],z-x_{i+1}\rangle=
	\langle \beta_iV'_{x_i}(x_{i+1}),z-x_{i+1}\rangle=\\
	&=&\beta_i[V_{x_i}(z)-V_{x_{i+1}}(z)-V_{x_i}(x_{i+1})],\;\;\forall \; z\in X,
	\ese
	where the last equality follows from the following remarkable identity (see, for example, \cite{chen1993convergence}): for any $u, u'$ and $w\in X$
	\[
	\langle V'_{u}(u'),w-u'\rangle=V_u(w)-V_{u'}(w)-V_u(u').
	\]
	Since, by definition, $ \xi_i =y_i-\nabla\phi(x_{i-1}) $ we get
	\bqn
	{}\quad\quad\quad\quad\langle \nabla \phi (x_i),x_{i+1}-z\rangle+\langle\psi'(x_{i+1}),x_{i+1}-z\rangle
	&\leq&
	\beta_i[V_{x_i}(z)-V_{x_{i+1}}(z)-V_{x_i}(x_{i+1})]-
	\nn
	&&-\langle \xi_{i+1},x_{i+1}-z\rangle.
	\eqn{eq:BT}
	It follows from Lemma~\ref{lem:smooth} and the condition $\beta_i\geq 2L$  that
	\bse
	F(x_{i+1})-F(z)\leq \varepsilon_{i+1}(z)\leq
	\langle \nabla \phi(x_i),x_{i+1}-z\rangle+\langle \psi'(x_{i+1}),x_{i+1}-z\rangle+{\beta_i\over 2}V_{x_i}(x_{i+1}).
	\ese
	Together with
	\eqref{eq:BT}, this inequality implies
	\bse
	\varepsilon_{i+1}(z)\leq \beta_i[V_{x_i}(z)-V_{x_{i+1}}(z)-\half V_{x_i}(x_{i+1})]-\langle \xi_{i+1},x_{i+1}-z\rangle.
	\ese
	On the other hand, due to the strong convexity of $V_x(\cdot)$ we have
	\bse
	\langle\xi_{i+1},z-x_{i+1}\rangle-{\beta_i\over 2}V_{x_i}(x_{i+1})&=&\langle\xi_i,z-x_{i}\rangle+\langle\xi_{i+1},x_{i}-x_{i+1}\rangle-{\beta_i\over 2}V_{x_i}(x_{i+1})\nn
	&\le &\langle\xi_{i+1},z-x_{i}\rangle+{\|\xi_{i+1}\|^2_*\over \beta_i}.
	\ese
	Combining these inequalities, we obtain
	\bqn{}\quad\quad\quad\quad
	F(x_{i+1})-F(z)\leq\varepsilon_{i+1}(z)\leq
	\beta_i[V_{x_i}(z)-V_{x_{i+1}}(z)]-\langle \xi_{i+1},x_{i}-z\rangle+{\|\xi_{i+1}\|^2_*\over \beta_i} \ \quad
	\eqn{almost}
	for all $z\in X$. Dividing \rf{almost}
	by $ \beta_i $ and taking the sum over $ i $ from $0$ to $ N-1 $  we obtain \rf{simple0}.
	
	We now prove the bound \rf{simple1}. Applying Lemma~6.1 of \cite{nemirovski2009robust} with $ z_0 = x_0 $ we get
	\bqn
	\forall z\in X,\quad\quad\sum_{i=1}^N\beta_{i-1}^{-1} \la \xi_{i},z-z_{i-1}\rangle\le V_{x_0}(z)+\half \sum_{i=1}^N \beta_{i-1}^{-2}\|\xi_i\|_*^2,
	\eqn{lemma61}
	where $z_i=\argmin_{z\in X}\big\{ \mu_{i-1}\langle \xi_i,z\rangle + V_{z_{i-1}}(z)\big\}$ depends only on $z_0,\xi_1,\dots,\xi_{i}$. Further,
	\bse
	\sum_{i=1}^{N}\beta^{-1}_{i-1}\la \xi_{i},z-x_{i-1}\ra&=&\sum_{i=1}^{N}\beta^{-1}_{i-1}[\la \xi_{i},z_{i-1}-x_{i-1}\ra+\la \xi_{i},z-z_{i-1}\ra]\leq
	\\&\leq& V_{x_0}(z)+\sum_{i=1}^N\beta^{-1}_{i-1} \la \xi_{i},z_{i-1}-x_{i-1}\ra+\half\beta^{-2}_{i-1} \|\xi_i\|_*^2.
	\ese
	Combining this inequality with \rf{simple0}, we get
	\rf{simple1}.
	\qed
	
	\textbf{A.3.~Proof of Corollary~\ref{cor:secmom}.}
	Note that \rf{eq:cor1-1} is an immediate consequence of \rf{simple0} and of the bounds for the moments of $ \| \xi_i \|_* $ given in Lemma~\ref{lem:cut}.
	Indeed, (\ref{eq:lemxi})(b) implies that, under the conditions of  Corollary~\ref{cor:secmom},
	\[
	|\bE_{x_{i-1}}\{\langle \xi_i,z-x_{i-1}\ra \}|\leq 2D\left[(M+\upsilon\sigma)\left({\sigma\over \lambda}\right)^{2}+{\sigma^2\over \lambda}\right]\leq {2D\sigma^2\over \lambda}\leq {2D\sigma\over \sqrt{N}}.
	\]
	Further, due to (\ref{eq:lemxi})(c),
	\[
	\bE_{x_{i-1}}\{\|\xi_i\|_*^2\}^{1/2}\leq \sigma+(M+\upsilon\sigma){\sigma\over \lambda}\leq 2\sigma.
	\]
	Taking the expectation of both sides of \rf{simple0} and using the last two inequalities we get \rf{eq:cor1-1}.
	The bound \rf{eq:cor1supz} is proved in a similar way, with the only difference that instead of inequality \rf{simple0} we use \rf{simple1}.
	\qed
	
	\textbf{A.4.~Proof of Theorem~\ref{th:coset1}.}
	By virtue of part (i) of Lemma~\ref{le:mart1}, under the condition $\tau\leq N/\upsilon^2$ we have that, with probability of at least
	$1-2e^{-\tau}$,
	\[\begin{array}{rcl}
	\sum_{i=1}^N\la \xi_{i},z_{i-1}-x_{i-1}\ra&\leq& 16R\max\{\sigma\sqrt{N\tau},M\tau\},\\
	\sum_{i=1}^N\|\xi_i\|_*^2&\leq& 40\max\{N\sigma^2,M^2\tau\}.
	\end{array}
	\]
	Plugging these bounds in \rf{simple1} we obtain that, with probability at least $1-2e^{-\tau}$, the following holds:
	\bse
	\bar\beta\sup_{z\in X}\varepsilon(x^N,z)&\le&2\bar\beta V_{x_0}(z)+\sum_{i=1}^{N}\Big[
	\langle \xi_{i},z_{i-1}-x_{i-1}\rangle +{\mbox{$3\over 2$}\bar\beta^{-1}\|\xi_{i}\|_*^2}\Big]\leq\\
	&\leq &2\bar\beta R^2\Theta+16R\max\{\sigma\sqrt{N\tau},M\tau\}+{60\bar\beta^{-1}\max\{N\sigma^2,M^2\tau\}}.
	\ese
	Next, taking $\bar\beta=\max\big\{2L, {\sigma\over R}\sqrt{N\over \Theta}\big\}$ we get
	\bqn
	N[F(\widehat{x}_N)-F(z)]&\le&
	\max\{4LR^2\Theta, 2\sigma R\sqrt{N\Theta}\}+16R\max\{\sigma\sqrt{N\tau},M\tau\}+\nn
	&&+\,60\max\{LR^2\tau /2, \sigma R\sqrt{N\Theta}\}\leq\nn
	&\leq&\max\{46 LR^2\tau,4LR^2\Theta,62\sigma R\sqrt{N\Theta},16\sigma R\sqrt{N\tau }\}
	\eqn{c'c''}
	for $1\leq \tau\leq N/\upsilon^2$. This implies \rf{eq:thm12}.\qed
	
	\textbf{A.5.~Proof of Theorem~\ref{th:coset1sqrt}.}
	We act in the same way as in the proof of Theorem~\ref{th:coset1} with the only difference that instead of part (i) of Lemma~\ref{le:mart1} we use part (ii) of that lemma, which implies that if $N\geq\upsilon^2$ then with probability at least $1-2e^{-\tau}$ the following inequalities hold:
	\[\begin{array}{rcl}
	\sum_{i=1}^N\la \xi_{i},z_{i-1}-x_{i-1}\ra&\leq& 8(1+\tau)R\max\{\sigma\sqrt{N},M\},\\
	\sum_{i=1}^N\|\xi_i\|_*^2&\leq& 8(2+3\tau)\max\{N\sigma^2,M^2\}.
	\end{array}
	\]
	
	\textbf{A.6.~Proof of Proposition~\ref{pro:certif}.}
	Define
	\bse
	\rho_N(\tau;\mu,\nu)&=&\nu^{-1}R^2\Theta+16 R\max\{\sigma\sqrt{N\tau},M\tau\}+\nn
	&&\quad\quad\quad\quad +\,
	20 (\mu+\nu)\max\{N\sigma^2,M^2\tau\}+
	\mu^{-1}\sum_{i=1}^NV_{x_{i-1}}(x_i).
	\ese
	The proposition is a direct consequence of the following result.
	\begin{lemma}\label{lem:cert1}
		Define
		\bqn
		\bar\rho_N(\tau)=\min_{\mu,\nu >0}\rho_N(\tau;\mu,\nu)&=&
		4R\sqrt{5\Theta\max\{N\sigma^2,M^2\tau\}}
		+16 R\max\{\sigma\sqrt{N\tau},M\tau\}+\nn
		&&+\,\min_{\mu > 0}\bigg\{ 20\mu\max\{N\sigma^2,M^2\tau\}+\mu^{-1}\sum_{i=1}^NV_{x_{i-1}}(x_i)\bigg\}.
		\eqn{eq:rhodef}
		Then, for $0<\tau\leq N/\upsilon^2$ and $t\ge L$ the following inequalities hold
		\bqn
		\begin{array}{lrcl}
			(a)\quad\quad\quad\quad&\Prob\left\{\epsilon_N(t)- \wh\epsilon_N(t)\geq \bar\rho_N(\tau)/N\right\}&\leq& 2\mathrm{e}^{-\tau},\\
			(b)\quad\quad\quad\quad&\Prob\left\{\wh\epsilon_N(t)-\epsilon_N(t)\geq \bar\rho_N(\tau)/N\right\}&\leq& 2\mathrm{e}^{-\tau}.\end{array}
		\eqn{eq:certstoch}
	\end{lemma}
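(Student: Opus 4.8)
The plan is to reduce both inequalities in \rf{eq:certstoch} to a single high-probability bound on the random linear form $\sum_{i=1}^N\la\xi_i,z-x_i\ra$, uniformly in $z\in X$. Write $A_N(z)=\sum_{i=1}^N[\la\nabla\phi(x_{i-1}),x_i-z\ra+\psi(x_i)-\psi(z)+tV_{x_{i-1}}(x_i)]$ and let $\wh A_N(z)$ denote the same expression with $\nabla\phi(x_{i-1})$ replaced by $y_i$, so that $N\epsilon_N(t)=\sup_z A_N(z)$ and $N\wh\epsilon_N(t)=\sup_z\wh A_N(z)$. The terms carrying $t$ and $\psi$ cancel in the difference, leaving $A_N(z)-\wh A_N(z)=\sum_{i=1}^N\la\xi_i,z-x_i\ra$; in particular the bound will not depend on $t$, consistent with the $t$-free form of $\bar\rho_N(\tau)$. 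If $z^*$ is a maximizer of $A_N$, then $N[\epsilon_N(t)-\wh\epsilon_N(t)]\le A_N(z^*)-\wh A_N(z^*)\le\sup_{z\in X}\sum_{i=1}^N\la\xi_i,z-x_i\ra$, and, using a maximizer of $\wh A_N$ instead, $N[\wh\epsilon_N(t)-\epsilon_N(t)]\le\sup_{z\in X}\sum_{i=1}^N\la\xi_i,x_i-z\ra$. Thus it suffices to bound $\sup_z\sum_i\la\xi_i,z-x_i\ra$ by $\bar\rho_N(\tau)$ on an event of probability $\ge1-2e^{-\tau}$; direction $(b)$ then follows verbatim upon replacing $\xi_i$ by $-\xi_i$, whose conditional moment estimates from Lemma~\ref{lem:cut} are unchanged.

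Next I would split, exactly as in the proof of Proposition~\ref{pr:muprop}, the form $\sum_i\la\xi_i,z-x_i\ra$ by means of the auxiliary mirror-descent sequence $z_i=\argmin_{z\in X}\{\nu\la\xi_i,z\ra+V_{z_{i-1}}(z)\}$, $z_0=x_0$, of Lemma~6.1 of \cite{nemirovski2009robust}, run with a constant step $\nu>0$ to be fixed below:
\[
\sum_{i=1}^N\la\xi_i,z-x_i\ra=\underbrace{\sum_{i=1}^N\la\xi_i,z-z_{i-1}\ra}_{(I)}+\underbrace{\sum_{i=1}^N\la\xi_i,z_{i-1}-x_{i-1}\ra}_{(II)}+\underbrace{\sum_{i=1}^N\la\xi_i,x_{i-1}-x_i\ra}_{(III)}.
\]
Only $(I)$ depends on $z$; Lemma~6.1 with this constant step gives $(I)\le\nu^{-1}V_{x_0}(z)+\frac{\nu}{2}\sum_i\|\xi_i\|_*^2\le\nu^{-1}R^2\Theta+\frac{\nu}{2}\sum_i\|\xi_i\|_*^2$ for every $z$, since $V_{x_0}(z)\le R^2\Theta$. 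The term $(II)$ is a genuine martingale sum: both $z_{i-1}$ and $x_{i-1}$ are measurable with respect to $\omega_1,\dots,\omega_{i-1}$ and $\|z_{i-1}-x_{i-1}\|\le D\le2R$, so the Bernstein argument underlying \rf{eq:zbound} applies to the predictable sequence $z_{i-1}$ just as it was used in the proof of Theorem~\ref{th:coset1}, giving $(II)\le16R\max\{\sigma\sqrt{N\tau},M\tau\}$ with probability $\ge1-e^{-\tau}$. For the non-predictable term $(III)$ I would use Young's inequality together with $\|x_{i-1}-x_i\|^2\le2V_{x_{i-1}}(x_i)$, obtaining $(III)\le\frac{\mu}{2}\sum_i\|\xi_i\|_*^2+\mu^{-1}\sum_iV_{x_{i-1}}(x_i)$ for every $\mu>0$.

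Finally I would invoke \rf{eq:sbound}, which under $0<\tau\le N/\upsilon^2$ gives $\sum_i\|\xi_i\|_*^2\le40\max\{N\sigma^2,M^2\tau\}$ with probability $\ge1-e^{-\tau}$; on the intersection of this event with the one for $(II)$ (probability $\ge1-2e^{-\tau}$) the three estimates combine into $\sup_z\sum_i\la\xi_i,z-x_i\ra\le\rho_N(\tau;\mu,\nu)$ for every $\mu,\nu>0$. The crucial observation is that the estimates for $(II)$ and for $\sum_i\|\xi_i\|_*^2$ are free of $\mu$ and $\nu$: hence I may fix $\nu$ at the deterministic minimizer $\nu^*=R\sqrt{\Theta/(20\max\{N\sigma^2,M^2\tau\})}$ of $\nu^{-1}R^2\Theta+20\nu\max\{N\sigma^2,M^2\tau\}$, which turns the $\nu$-part of $(I)$ into $4R\sqrt{5\Theta\max\{N\sigma^2,M^2\tau\}}$, while $\mu$ enters only through Young's inequality in $(III)$ on the same event, so I keep the minimum over $\mu>0$. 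Since the $\mu$- and $\nu$-parts of $\rho_N$ separate, this yields exactly $\bar\rho_N(\tau)$ of \rf{eq:rhodef} and proves both $(a)$ and $(b)$. The main obstacle here is conceptual rather than computational: the maximizer $z^*$ is a random point correlated with the noise, so no fixed-$z$ martingale bound applies directly; the device that resolves this is the virtual sequence $z_i$, which converts the $z$-dependent form into the deterministic piece $(I)$ plus the predictable, uniformly bounded piece $(II)$, after which the proof rests entirely on the already-established estimates \rf{eq:zbound} and \rf{eq:sbound}.
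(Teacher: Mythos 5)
Your proof is correct and takes essentially the same route as the paper's: the same reduction of $\epsilon_N(t)-\wh\epsilon_N(t)$ (and its negative) to $\sup_{z\in X}\sum_{i=1}^N\langle\xi_i,z-x_i\rangle$, the same decomposition via Young's inequality and the auxiliary sequence of Lemma~6.1 of \cite{nemirovski2009robust} with constant step $\nu$, and the same invocation of Lemma~\ref{le:mart1} followed by minimization over $\mu,\nu$. The only differences are expository: you spell out part~(b) by the sign-symmetry of the martingale bounds and make explicit the separation of the $\mu$- and $\nu$-minimizations, both of which the paper leaves implicit.
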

	
	\begin{proofoflemma}
		{\!\!}~Let us prove the first inequality in \rf{eq:certstoch}. Recall that $\xi_i=y_i-\nabla\phi(x_{i-1})$, $i=1,...,N$.
		Due to the strong convexity of $ V_x(\cdot) $, for any $ z \in X $ and $ \mu > 0 $ we have
		\bse
		\la \xi_{i},z-x_{i}\rangle&=&\la \xi_{i},z-x_{i-1}\rangle+\la \xi_{i},x_{i-1}-x_i\rangle\leq\\
		&\leq& \la \xi_{i},z-x_{i-1}\rangle+\mbox{$\mu\over 2$}\|\xi_i\|_*^2+\mbox{$1\over \mu$}V_{x_{i-1}}(x_{i}).
		\ese
		Thus, for any $\nu>0$,
		\bse
		\sum_{i=1}^N\la \xi_{i},z-x_{i}\rangle&\leq &\sum_{i=1}^N\left[\la \xi_{i},z-x_{i-1}\rangle+\mbox{$\mu\over 2$}\|\xi_i\|_*^2+\mbox{$1\over \mu$}V_{x_{i-1}}(x_{i})\right]\leq\\
		&\leq &\mbox{$1\over \nu$}V_{x_0}(z)+\sum_{i=1}^N\left[\mbox{$\nu\over 2$}\|\xi_i\|_*^2+\la \xi_{i},z_{i-1}-x_{i-1}\rangle+\mbox{$1\over \mu$}V_{x_{i-1}}(x_{i})+\mbox{$\mu\over 2$}\|\xi_i\|_*^2\right]
		\ese
		(to obtain the last inequality, we have used Lemma~6.1 from \cite{nemirovski2009robust} with $ z_0 = x_0 $ in the same way as in the proof of the Proposition~\ref{pr:muprop}).
		By Lemma~\ref{le:mart1} there is a set $\cA_N$ of probability at least $ 1-2 \mathrm{e}^{-\tau} $ in the space of realizations $\omega^N$  such that, for all $\omega^N\in\cA_N$,
		\[
		\sum_{i=1}^N\la \xi_{i},z_{i-1}-x_{i-1}\ra\leq 16R\max\{\sigma\sqrt{N\tau},M\tau\}\;\;\mbox{and}\;\;
		\sum_{i=1}^N\|\xi_i\|_*^2\leq 40\max\{N\sigma^2,M^2\tau\}.
		\]
		Recalling that $V_{x_0}(z)\leq R^2\Theta$, we conclude that $  \sum_{i=1}^N\la \xi_{i},z-x_{i}\rangle
		\leq \rho_N(\tau;\mu,\nu)
		$ for all $z\in X$ and all $\omega^N\in \cA_N$.
		Therefore, for $\omega^N\in \cA_N$ we have
		\[
		\epsilon_N(t)-\wh\epsilon_N(t)=N^{-1}\sup_{z\in X}\sum_{i=1}^N\la \xi_{i},z-x_{i}\rangle\leq N^{-1}\min_{\mu,\nu\geq 0}\rho_N(\tau;\mu,\nu)=N^{-1}\bar\rho_N(\tau),
		\]
		which proves the first inequality in (\ref{eq:certstoch}). The proof of the second inequality in (\ref{eq:certstoch}) is similar and therefore it is  omitted.
		\qed
	\end{proofoflemma}

	\textbf{A.7.~Proof of Corollary~\ref{cor:smdb}.}
	From the definition of $\epsilon_N(\cdot)$ we deduce that
	\[
	\bar\beta\sum_{i=1}^N V_{x_{i-1}}(x_i)\leq \epsilon_N(\bar\beta),
	\]
	and we get \rf{barrho} by taking $\mu=1/\bar\beta$. On the other hand, one can check that for
	$\bar\beta\geq \max\left\{2L,{\sigma\sqrt{N}\over R\sqrt{\Theta}}\right\}$ the following inequalities hold:
	\bse
	\bar\rho(\tau)&\leq& N\epsilon_N(\bar\beta)+\max\Big\{
	[(20+4\sqrt{5})\sqrt{\Theta}+16\sqrt{\tau}] R\sigma\sqrt{N},\,
	(4\sqrt{5\Theta\tau}+26\tau)LR^2\Big\}\leq\\
	&\leq &N\epsilon_N(\bar\beta)+C_1R\sigma\sqrt{N[\Theta\vee \tau]}+C_2LR^2[\Theta\vee \tau].
	\ese
	Finally, since  $\wh\epsilon_N(\bar\beta)\leq\epsilon_N(\bar\beta)+\bar\rho(\tau)/N$  with probability at least $1-2\mathrm{e}^{-\tau}$ (cf. (\ref{eq:certstoch})(b)) we have
	\[\Delta_N(\tau,\bar\beta)=  \wh\epsilon_N(\bar\beta)+\bar\rho(\tau)/N\leq \epsilon_N(\bar\beta)+2\bar\rho(\tau)/N\]
	with the same probability. This implies \rf{eq:jnm}.\qed
	

	\textbf{A.8.~Proof of Theorem~\ref{th:strc}.}
	
	\par\noindent
	\textbf{1$^o$.}
	We first show that for each $k=1,\dots,m=m(N)$, the following is true.
	
	\noindent
	{\bf Fact ${I_k}$}.
	{\it There is a random event $\cB_k\subseteq \Omega^{\otimes N}$ of probability at least $1- 2k e^{-\tau}$ such that for all $\omega^N \in\cB_k$ the following inequalities hold:
		\bqn
		\begin{array}{lrcl}
			(a)\quad&\| y_{k} - \bar x(y_{k}) \|^2 &\leq& r^2_{k} = 2^{-k}r^2_0 \ \mbox{~~for some~} \ \bar x(y_{k})\in X_*,\\
			(b)\quad&F(y_k)-F_*&\le& {\kappa\over 2} r_k^2= 2^{-k-1}\kappa r_0^2.
		\end{array}
		\eqn{eq:rbe1}
	}
	The proof of Fact ${I_k}$ is carried out by induction. Note that (\ref{eq:rbe1})(a) holds with probability~1 for $k=0$. Set
	$\cB_0= \Omega^{\otimes N}$. Assume that (\ref{eq:rbe1})(a) holds for some $k\in \{0,\dots,m-1\}$ with probability at least $1-2ke^{-\tau}$, and let us show that then Fact ${I_{k+1}}$ is true.
	
	Define $F_*^k=\min_{x\in X_{r_{k}}(y_{k})} F(x)$ and let $X_*^k$ be the set of all minimizers of function $F$ on $X_{r_{k}}(y_{k})$.
	By Theorem~\ref{th:coset1} and the definition of $N_k$ (cf. \rf{N_k}), there is an event $\cA_k$ of probability at least $1-2e^{-\tau}$ such that for $\omega^N\in \cA_k$ after the $(k+1)$-th stage of the algorithm we have
	\bse
	{\kappa\over 2}\|y_{k+1}-\bar{x}_k(y_{k+1})\|^2\leq F(y_{k+1})-F_*^k&\leq&\max\left\{C_1{Lr^2_{k}[\tau\vee\Theta]\over N_{k+1}},C_2\sigma r_{k}\sqrt{[\tau\vee\Theta]\over N_{k+1}}\right\}\leq\\
	&\leq& {\kappa\over 4}r_{k}^2={\kappa\over 2}r_{k+1}^2,
	\ese
	where $\bar{x}_k(y_{k+1})$ is the projection of $y_{k+1}$ onto $X_*^k$.
	Set $\cB_{k+1}=\cB_{k}\cap \cA_k$. Then
	\[\Prob\{\cB_{k+1}\}\geq\Prob\{\cB_{k}\}+\Prob\{\cA_k\}-1\geq 1-{2(k+1) e^{-\tau}}.
	\]
	In addition, due to the assumption of induction, on the set $\cB_k$ (and, therefore, on $\cB_{k+1}$) we have
	\[
	\| y_{k} - \bar x(y_{k}) \| \leq r_{k},
	\]
	i.e., the distance between $ y_{k} $ and the set $X_*$ of global minimizers  does not exceed $ r_{k} $. Therefore, the set $X_{r_{k}}(y_{k})$ has a non-empty intersection with $X_*$. Thus, $X^k_*\subseteq X_*$, the point $\bar{x}_k(y_{k+1})$ is contained in $X_*$ and  $F_*^k$ coincides with the optimal value $F_*$ of the initial problem. We conclude that
	\[
	{\kappa\over 2}\|y_{k+1}-\bar x(y_{k+1})\|^2\leq F(y_{k+1})-F_*\le {\kappa\over 2} r_{k+1}^2= 2^{-k}\kappa r_0^2
	\]
	for some $\bar x(y_{k+1})\in X_*$.
	\par\noindent
	\textbf{2$^o$.} We now prove the theorem in the case  $\ovN_1\geq 1$. This condition is equivalent to the fact that $\ovN_k\geq 1$ for all $k=1,\dots,m(N)$, since  $\ovN_1\leq \ovN_2\leq \cdots\leq \ovN_{m(N)}$ by construction. Assume that $\omega^N\in \cB_{m(N)}$, so that \rf{eq:rbe1} holds with $k=m(N)$. Since $ \ovN_1 \geq 1$ we have $N_{k}\leq 2\ovN_{k}$. In addition, $\ovN_{k+1}\le 2\ovN_{k}$. Using these remarks and the definition of $m(N)$ we get
	\bqn
	N\leq \sum_{k=1}^{m(N)+1}N_k\le 2\sum_{k=1}^{m(N)+1}\ovN_k\le 2\sum_{k=1}^{m(N)}\ovN_k + 4 \ovN_{m(N)}
	\le 6\sum_{k=1}^{m(N)}\ovN_k.
	\eqn{eq:N}
	Thus, using the definition of $\ovN_k$ (cf. \rf{N_k}) we obtain
	\bse
	N&\leq& 6\sum_{k=1}^{m(N)}\max\left\{4C_1{L[\tau\vee\Theta]\over \kappa},\,16C_2{\sigma^2[\tau\vee\Theta]\over \kappa^2r^2_{k-1}}\right\}\leq\\
	&\leq&\underbrace{{24}\sum_{k=1}^{\bar k-1} {C_1L[\tau\vee\Theta]\over \kappa}}_{S_1}+\underbrace{{96}\sum_{k=\bar k}^{m(N)}{C_2\sigma^2[\tau\vee\Theta]\over \kappa^2r^2_{k-1}}}_{S_2},
	\ese
	where
	\[
	\bar{k}=\min\left\{k:\,{4C_2\sigma^2\over\kappa r_{k-1}^2}\geq C_1L
	\right\}.
	\]
	Two cases are possible: $S_1\geq N/2$ or $S_2\geq N/2$. If $S_1\geq N/2$, then
	\[
	\bar k\geq {C'\kappa N\over L[\tau\vee \Theta]},
	\]
	so that if $\omega^N\in \cB_{m(N)}$ then
	\bqn
	F(\wh{x}_N)-F_*\leq {\kappa\over 2}r^2_{m(N)}\leq {\kappa\over 2}r^2_{\bar k}=
	2^{-\bar k-1}{\kappa}r^2_{0}\leq C\kappa r_0^2\exp\left\{-{C'\kappa N\over L[\tau\vee \Theta]}\right\}.
	\eqn{1exp}
	If $S_2\geq N/2$ the following inequalities hold:
	\[
	{\kappa^2r^2_{0}N\over \sigma^2[\tau\vee\Theta]} \le  {C\kappa^2r^2_{0}\over \sigma^2[\tau\vee\Theta]}\sum_{k=\bar k}^{m(N)}2^{k}{\sigma^2[\tau\vee\Theta]\over \kappa^2r^2_{0}}\le C'2^{m(N)-\bar k} .
	\]
	Therefore, in this case for $\omega^N\in \cB_{m(N)}$ we have
	\bqn
	F(\wh{x}_N)-F_*\leq {\kappa\over 2}r^2_{m(N)}= {\kappa\over 2}r^2_{\bar k}2^{-m(N)+\bar k}\leq {\kappa\over 2}r^2_{0}2^{-m(N)+\bar k}\leq C{\sigma^2[\tau\vee \Theta]\over \kappa N}.
	\eqn{2exp}
	%
	\par\noindent\textbf{3$^o$.}
	Finally, consider the case
	\bqn
	\ovN_1:=\max\left\{4C_1{L[\tau\vee\Theta]\over \kappa},\,16C_2{\sigma^2[\tau\vee\Theta]\over \kappa^2r^2_{0}}\right\}<1.
	\eqn{r0bound}
	Let $k_*\geq 2$ be the smallest integer $ k $ such that $\ovN_k\geq 1$.
	If $k_*> N/4$ it is not difficult to see that $m(N)\geq N/4$ and therefore for $\omega^N\in\cB_{m(N)}$ we have
	\bqn
	F(y_{m(N)})-F_*\leq {\kappa\over 2}r^2_{m(N)}
	\leq {\kappa\over 2}r^2_{0}\,2^{-N/4}.
	\eqn{exp3}
	If $2\le k_*\leq N/4$ we have the following chain of inequalities:
	$$
	3\sum_{k=k_*}^{m(N)} \ovN_k\ge \ovN_{m(N)+1} + \sum_{k=k_*}^{m(N)} \ovN_k = \sum_{k=1}^{m(N)+1} \ovN_k -\sum_{k=1}^{k_*-1} \ovN_k \ge \sum_{k=1}^{m(N)+1} \ovN_k - N/4\ge {N/4},
	$$
	where the first inequality uses the fact that $\ovN_{m(N)+1}\le 2\ovN_{m(N)}$ and the last inequality follows from the definition of $m(N)$. Based on this remark and on the fact that
	$\ovN_k/2^k \le \ovN_{k_*}/2^{k_*}$ for $k\ge k_*$ we obtain
	\bse
	{N\over {12}}&\leq&  \sum_{k=k_*}^{m(N)} \ovN_k\leq \sum_{k=k_*}^{m(N)}2^{k-k_*}\ovN_{k_*}\leq 2^{m(N)-k_*+2},
	\ese
	where the last inequality follows by noticing that $\ovN_{k_*}\le 2\ovN_{k_*-1} <2$. Hence, taking into account (\ref{eq:rbe1})(b) we get that, for $\omega^N\in\cB_{m(N)}$,
	\[
	F(\wh{x}_N)-F_*\leq \kappa r_{k_*}^22^{-m(N)+k_*}\leq \kappa r_{0}^22^{-m(N)+k_*}\leq C\kappa r_{0}^2/N.
	\]
	Combining this bound with \rf{1exp}, \rf{2exp} and \rf{exp3} we get
	\rf{th2bound}.
	\qed

\end{document}